\documentclass[11pt]{article}
\usepackage[margin=1in]{geometry}
\usepackage[authoryear,round]{natbib}

\title{A One-Sided McDiarmid-Type Inequality for Heavy Tails via Reverse KL Tensorization}
\author{
  Haruka Eshima \\
  \small Okinawa Institute of Science and Technology, Okinawa, Japan \\
  \small \texttt{haruka.eshima@oist.jp}
  \and
  Amedeo Roberto Esposito \\
  \small  Okinawa Institute of Science and Technology, Okinawa, Japan \\
  \small \texttt{amedeo.esposito@oist.jp}
  \and
  Andrea Paudice \\
  \small Department of Computer Science, Aarhus University, Aarhus, Denmark \\
  \small \texttt{apaudice@cs.au.dk}
}
\date{}
\usepackage{amsmath,amssymb,mathtools,bm}
\usepackage{booktabs}
\usepackage{xcolor}
\usepackage{comment}
\usepackage[colorlinks=true,linkcolor=blue,citecolor=blue,urlcolor=blue]{hyperref}
\usepackage[nameinlink]{cleveref}
\usepackage{amsthm}
\newtheorem{theorem}{Theorem}[section]
\newtheorem{lemma}[theorem]{Lemma}
\newtheorem{proposition}[theorem]{Proposition}
\newtheorem{corollary}[theorem]{Corollary}
\theoremstyle{remark}
\newtheorem{remark}[theorem]{Remark}
\theoremstyle{plain}
\newenvironment{proofof}[1]{\begin{proof}[Proof of #1]}{\end{proof}}
\newenvironment{keywords}{\par\noindent\textbf{Keywords:} }{\par}
\usepackage{enumitem}
\usepackage{colortbl}
\usepackage{multirow}
\usepackage{hhline}
\usepackage{threeparttable}
\definecolor{ourgreen}{rgb}{0.85,0.97,0.82}
\newcommand{\ours}{\rowcolor{ourgreen}}
\newcommand{\yesmark}{\textcolor{green!50!black}{$\checkmark$}}
\newcommand{\nomark}{\textcolor{red!75!black}{$\times$}}
\newcommand{\thinline}{\arrayrulecolor{black!45}\hhline{~-----}\arrayrulecolor{black}}
\usepackage{newtx}
\newcommand{\E}{\mathbb{E}}
\newcommand{\Pp}{\mathbb{P}}

\newcommand{\X}{\bm{X}}
\newcommand{\Y}{\bm{Y}}

\newcommand{\U}{\bm{U}}

\newcommand{\Dk}{D_k(\X)}
\newcommand{\KL}{D_{\mathrm{KL}}}
\newcommand{\KLi}[1]{\KL^{(i)}\!\left(#1\right)}
\newcommand{\esssup}{\operatorname*{ess\,sup}}

\newcommand{\med}{\operatorname{med}}
\newcommand{\tcell}[1]{\begin{tabular}[t]{@{}l@{}}#1\end{tabular}} 
\newcommand{\ccell}[1]{\begin{tabular}[t]{@{}c@{}}#1\end{tabular}} 

\providecommand{\Prob}{\mathbb{P}}

\newtheorem{assumption}{Assumption}[section]
\crefname{assumption}{Assumption}{Assumptions}
\Crefname{assumption}{Assumption}{Assumptions}

\begin{document}

\maketitle

\begin{abstract}
We revisit \emph{McDiarmid's inequality}, a central tool in statistical learning theory and algorithm analysis. We show that large deviations above the mean are exponentially unlikely for functions of independent random variables that are almost surely bounded above and satisfy a conditional moment assumption of order $p\in(1,2]$. When $p<2$, the result applies even when the function has infinite variance. Our proof relies on a new tensorization inequality for the Kullback--Leibler divergence, termed \emph{reverse KL tensorization}, which may be of independent interest. Applications include concentration of the sample mean and median-of-means estimator, and one-sided uniform risk bounds for unbounded losses.
\end{abstract}

\begin{keywords}%
Concentration Inequalities, Heavy Tails, Entropy Method
\end{keywords}

\section{Introduction}
Concentration inequalities are fundamental tools in statistical learning theory, multi-armed bandits, and randomized algorithms. Among them, McDiarmid's inequality \citep{mcdiarmid1989method} has been extensively studied and applied \citep{kutin2002extensions, kutin2002almost, kontorovich2014concentration, maurer2021concentration, li2023distribution, li2024concentration, li2024concentration_icml, lei2026stability}. Its distribution-free guarantees make it particularly useful for analyzing the generalization error of learning algorithms \citep{kontorovich2014concentration, bousquet2002stability, boucheron2005theory, bousquet2020sharper}. Moreover, it applies directly to nonlinear statistics \citep{maurer2018weak, maurer2019bernstein, maurer2019uniform, li2024concentration}, extending beyond the setting of sums of independent random variables covered by the classical Bernstein inequality \citep{wainwright2019high}. However, McDiarmid's inequality requires the bounded-differences condition: replacing any single input, while keeping the others fixed, can change the function's value by at most a fixed amount, regardless of the input values (see \Cref{sec:Related Work}). This requirement can limit its direct applicability to learning problems with unbounded loss functions \citep{kontorovich2014concentration, li2024concentration}.

This limitation is particularly relevant in applications where data exhibit heavy tails \citep{cont2001empirical, Newman01092005, clauset2009, piantadosi2014zipf, zhang2020adaptive, kunstner2024heavy}. The loss itself may also be heavy-tailed \citep{brownleesEmpiricalRiskMinimization2015, hsu2016loss, cortes2019relative}, potentially with infinite variance \citep{aminiangeneralization}. Such settings motivate the development of concentration inequalities beyond boundedness and sub-Gaussian assumptions \citep{Catoni2012, mendelsonLearningConcentration2015, simsekliTailIndexAnalysisStochastic2019, martin2021implicit}. Although robust mean estimators can achieve sub-Gaussian deviation rates under a finite-variance assumption \citep{Catoni2012, devroye2016subgaussian}, these rates cannot generally be guaranteed under weaker moment assumptions \citep{bubeck2013bandits, devroye2016subgaussian}. The empirical mean is particularly sensitive to heavy tails: its deviation probabilities may decay only polynomially with the sample size \citep{FukNagaev1971, Nagaev1979, Catoni2012, bakhshizadeh2023sharp}.

In many machine learning and statistical applications, however, the quantities of interest are non-negative losses or returns \citep{bubeck2013bandits, thomas2015high, mendelsonLearningConcentration2015, koltchinskii2015bounding, oliveira2016lower}. This structure can yield stronger one-sided concentration guarantees. Recently, \citet{aminiangeneralization} used non-negativity to extend the one-sided Bernstein inequality \citep[Proposition 2.14]{wainwright2019high} to the heavy-tailed setting under a finite $p$-th moment assumption, for $p\!\in\!(1,2]$. The resulting inequality exponentially bounds the probability that the empirical mean falls below its expectation. For losses, this controls the upper tail of the population risk minus the empirical risk, showing that robust estimators are not always necessary for one-sided generalization guarantees.

\paragraph{Contribution.}In this work, we establish a heavy-tailed extension of McDiarmid's inequality for one-sided deviations. Our result provides exponential concentration for a class of nonlinear functions, extending beyond the additive setting considered by \citet{aminiangeneralization}. To address the challenges of deriving concentration bounds for nonlinear functions, we develop a new reverse KL tensorization argument (\Cref{thm:reverse-kl}) that may be of independent interest. We illustrate the scope of our inequality through three applications: recovering existing bounds for additive functions, establishing concentration of the median-of-means (MoM) estimator \citep{nemirovskij1983problem, jerrum1986random, alon1996space} around its own expectation, and bounding the population risk uniformly over a class of nonnegative, possibly unbounded
losses. In contrast to the finite-class uniform bound of \citet[Theorem~5.4]{aminiangeneralization}, our uniform risk bound applies to possibly infinite classes with finite Rademacher complexity.

\begin{table}[t]
  {\caption{\small Comparison with previous concentration results; $f=f(\X)$, $p^*=p/(p-1)$. For any norm $\|\cdot\|_a$, $\big\|\|f_k(\X)\|_{a}\big\|_{L_\infty}\coloneqq\left\|\!\left\|f_k(\X)\right\|_{a}(\X)\right\|_{L_{\infty}}$, similarly for $D_k$ (see \Cref{sec:Related Work}, \Cref{ass:McDiarmid's} for definitions). ``$t\asymp r$'' means a deviation of $\mathcal{O}(r)$ with probability $1-e^{-\Omega(K)}$\,.\textsuperscript{a}\\
  The last column indicates whether $f(\X)$ may have infinite variance.}\label{tab:comparison}}%
  {\footnotesize
  \setlength{\tabcolsep}{3pt}%
  \renewcommand{\arraystretch}{1.15}%
  \begin{threeparttable}
  \begin{tabular}{|c|lllcc|}
  \hline
  Setup & Result & Assumption & Deviation & Tail decay & $\infty$-var. \\
  \hline\hline
  & \tcell{\citeauthor{mcdiarmid1989method}\\ \citeyearpar{mcdiarmid1989method}}
    & $\big\|\|f_k(\X)\|_{L_\infty}\big\|_{L_\infty}<\infty$
    & $\Pp(f-\E f\!\ge\! t)$
    & $\exp(-\mathcal{O}(t^2))$
    & \nomark \\
  \thinline
  & \tcell{\citeauthor{maurer2021concentration}\\ \citeyearpar{maurer2021concentration}}
    & $\big\|\|f_k(\X)\|_{\psi_1}\big\|_{L_\infty}<\infty$
    & $\Pp(f-\E f\!\ge\! t)$
    & $\exp(-\mathcal{O}(t^2\wedge t))$
    & \nomark \\
  \thinline
  & \tcell{\citeauthor{li2024concentration}\\ \citeyearpar{li2024concentration}, Cor.~29}
    & \tcell{$\big\|\|f_k(\X)\|_{L_p}\big\|_{L_\infty}<\infty$,\\ $p\!\ge\!2$}
    & $\Pp(|f-\E f|\!\ge\! t)$
    & $\exp(-\mathcal{O}(t^2))+\mathcal{O}(t^{-p})$
    & \nomark \\
  \thinline
  & \tcell{\citeauthor{lei2026stability}\\ \citeyearpar{lei2026stability}, Thm.~2.6}
    & \tcell{$\big\|\|f_k(\X)\|_{L_p}\big\|_{L_\infty}<\infty$,\\ $p\in(1,2)$}
    & $\Pp(|f-\E f|\!\ge\! t)$
    & $\mathcal{O}(t^{-p})$
    & \yesmark \\
  \cline{2-6}
  \ours
  \cellcolor{white}\multirow{-9}{*}{General $f$}
  & \tcell{\textbf{Ours} -- \\ \Cref{thm:heavy-tail}}
    & \tcell{$\big\|\|\Dk\|_{L_p}\big\|_{L_\infty}<\infty$,\\ $p\in(1,2]$}
    & $\Pp(f-\E f\!\ge\! t)$
    & $\exp(-\mathcal{O}(t^{p^*}))$
    & \yesmark \\
  \hline
  \hline
  & \tcell{\citeauthor{wainwright2019high}\\ \citeyearpar{wainwright2019high}, Prop.~2.14}
    &  \tcell{indep., $X_k\le b$ a.s.,\\ $\E[(b-X_k)^2]\!<\!\infty$}
    & $\Pp(\overline X-\E\overline X\!\ge\! t)$
    & $\exp(-\mathcal{O}(n(t^2\wedge t)))$
    & \nomark \\
  \cline{2-6}
  \ours
  \cellcolor{white}\multirow{-2}{*}{\shortstack{Empirical Average\\ ($f=\overline X$)}}
  & \tcell{\citeauthor{aminiangeneralization} \citeyearpar{aminiangeneralization}; \\\textbf{Ours}\tnote{b} -- \Cref{col:gen_empirical}}
    & \tcell{indep., $X_k\le b$ a.s.,\\ $\E[(b-X_k)^p]\!<\!\infty$, $p\in(1,2]$}
    & $\Pp(\overline X-\E\overline X\!\ge\! t)$
    & $\exp(-\mathcal{O}(nt^{p^*}))$
    & \yesmark \\
  \hline
  & \tcell{\citeauthor{bubeck2013bandits}\\ \citeyearpar{bubeck2013bandits}, Lem.~2}
    & \tcell{i.i.d.,\\ $V_p\!\coloneqq\!\E[|X-\E X|^p]\!<\!\infty$}
    & $\Pp(\widehat\mu-\E X\!\ge\! t)$
    & $t\asymp V_p^{1/p}(K/n)^{1/p^*}$
    & \yesmark \\
  \cline{2-6}
  \ours
  \cellcolor{white}
  & \tcell{\textbf{Ours} -- \\ \Cref{col:mom_rate}}
    & \tcell{i.i.d., $X\le b$ a.s.,\\ $M_p\!\coloneqq\!\E[(b-X)^p]\!<\!\infty$}
    & $\Pp(\widehat\mu-\E X\!\ge\! t)$
    & \ccell{$t\asymp M_p^{1/p}(K/n)^{1/p^*}$}
    & \yesmark \\
  \thinline
  \ours
  \cellcolor{white}\multirow{-2}{*}{\shortstack{MoM\tnote{c}\\ ($f=\widehat\mu$)}}
  & \tcell{\textbf{Ours} -- \\ \Cref{rmk:mom}}
    & \tcell{i.i.d., $X\le b$ a.s.,\\ $M_p\!\coloneqq\!\E[(b-X)^p]\!<\!\infty$}
    & $\Pp(\widehat\mu-\E X\!\ge\! t)$
    & $\exp(-\mathcal{O}(nt^{p^*}))$
    & \yesmark \\
    \thinline
  \ours
  \cellcolor{white}
    & \tcell{\textbf{Ours} -- \\ \Cref{prop:non-iid-direct-mom}}&\tcell{indep., $X_{ji}\le b_{ji}$ a.s.,\\ $\E(b_{ji}-X_{ji})^p\!<\!\infty$}
    & $\Pp(\widehat\mu-\E\widehat\mu\!\ge\! t)$
    & $\exp(-\mathcal{O}(n(t/K)^{p^*}))$
    & \yesmark \\
  \hline
  \end{tabular}
  \begin{tablenotes}
     \item[a] That is, $\Pp(\widehat\mu-\E X\!\ge\! Cr)\le e^{-cK}$ for absolute constants $c,C>0$; choosing $K\asymp\ln(1/\delta)$ gives confidence
$1-\delta$.\\ This is a guarantee at the single scale $r$ fixed by $K$,
not a bound for every $t$, in contrast to the
$\exp(-\mathcal{O}(\,\cdot\,))$ entries. 
    \item[b] In fact, the upper bounds $b_k$ may vary across samples in our \Cref{col:gen_empirical}.
    \item[c] For the median-of-means (MoM) setup, $1<p\le2$.

  \end{tablenotes}
  \end{threeparttable}}
\end{table}

\section{Related Work}\label{sec:Related Work}
\paragraph{Generalizations of McDiarmid's inequality.} Let $X_1,\ldots,X_n$ be independent random variables taking values in $\mathcal X$, and write $\X\!=\!(X_1,\ldots, X_n)$. Let $P_{\X}\!=\!\bigotimes_{k=1}^n P_{X_k}$ be the law of $\X$. Let $f\!:\!\mathcal{X}^n\to\mathbb{R}$ be integrable with respect to $P_{\X}$. For every $k\!\in\![n]\!\coloneqq\!\{1,\dots,n\}$, let $X_k'$ have the same distribution as $X_k$ and be independent of $\X$. Let $\X_{-k}$ denote the random vector obtained by removing the $k$-th coordinate from $\X$. We define $\X^{(k)}\!\coloneqq\!(X_1,\ldots,X_{k-1},X_k',X_{k+1},\ldots,X_n).$

McDiarmid's inequality \citep{mcdiarmid1989method} assumes that replacing one coordinate changes the function's value by at most a fixed amount:
\[
|f(\X)-f(\X^{(k)})|\le c_k
\qquad\text{almost surely, for every }k,
\]
where $c_1,\ldots,c_n\!>\!0$ are deterministic constants. Under this bounded-differences condition, for every $t\!>\!0$,
\[
\mathbb P\!\left(f(\X)-\mathbb E f(\X)\ge t\right)
\le
\exp\!\left(-\frac{2t^2}{\sum_{k=1}^n c_k^2}\right).
\]
To extend its applicability in learning theory, numerous extensions relax the bounded-differences condition to accommodate unbounded functions \citep{kutin2002extensions, kutin2002almost, kontorovich2014concentration, maurer2021concentration, li2023distribution, li2024concentration_icml, li2024concentration, lei2026stability}. In particular, \citet{kontorovich2014concentration} replaces the finite-diameter assumption on the underlying metric probability spaces with a sub-Gaussian diameter condition. \citet{maurer2021concentration} develop related bounds under sub-Gaussian and sub-exponential assumptions on the fluctuations from varying one coordinate at a time. A key quantity introduced in \citet{maurer2021concentration}, later used in \citet{li2023distribution, li2024concentration, li2024concentration_icml}, is the $k$-th centered conditional version $f_k(\X)$ of $f$, a random-variable-valued function
\begin{equation*}
    f_k(\X)(x)\coloneqq f(x_1,\dots,x_{k-1},X_{k},x_{k+1},\dots,x_n)-\E_{P_{X_k}}[f(x_1,\dots,x_{k-1},X_{k},x_{k+1},\dots,x_n)].
\end{equation*}
Here, $f_k(\X)(x)$ measures the variation of $f$ in its $k$-th input, with randomness only from $X_k$ and the other coordinates fixed at $x_{-k}\!\coloneqq\!(x_1,\dots,x_{k-1},x_{k+1},\dots,x_n)$. For a norm $\|\cdot\|_a$,\footnote{Such as sub-Gaussian, sub-exponential, Orlicz or $L_p$ norms.} \citet{maurer2021concentration} also define the non-negative function $\|f_k(\X)\|_a(x)\!\coloneqq\!\|f_k(\X)(x)\|_a$ of $x\in\mathcal{X}^n$.

Prior works control this quantity: the assumption column of the General $f$ block in \Cref{tab:comparison} lists, for each result, the condition on $f_k(\X)$ implied by its assumptions. To compare the moment requirements, rewrite the $k$-th conditional variance $v_k(x)$ of $f$ at $x_{-k}\in\mathcal X^{n-1}$ as
\[
v_k(x)\coloneqq\operatorname{Var}_{X_k}\!\left(
f(x_1,\ldots,x_{k-1},X_k,x_{k+1},\ldots,x_n)
\right)=\E_{P_{X_k}}\big[f_k(\X)(x)^2\big] = \|f_k(\X)\|^2_{L_2}(x).
\]
The uniform sub-exponential assumptions in \citet{maurer2021concentration} imply the $k$-th conditional variance is bounded for every $k$ and $x\in\mathcal{X}^n$. The bounds of \citet{li2023distribution} instead use averaged variance quantities, whose finiteness requires $\mathbb E[v_k(\X)]<\infty$, together with additional tail and interaction assumptions. \citet{li2024concentration} derive concentration bounds from moment inequalities using Markov's inequality. Their explicit concentration results under polynomial moment assumptions, together with the corresponding results in \citet{li2024concentration_icml}, require uniformly bounded conditional variances and therefore finite variance of $f(\X)$.

\citet{lei2026stability} establish explicit McDiarmid-type tail bounds under finite $L_p$ moment conditions on coordinatewise differences. For $1<p<2$, these conditions allow $f(\X)$ to have infinite variance, but the resulting tail bounds decay polynomially for large deviations. We complement these results with a one-sided McDiarmid-type inequality that yields exponential tail bounds under assumptions allowing $f(\X)$ to have infinite variance.
\paragraph{Concentration for heavy-tailed random variables.}  \citet{lei2026stability} use a Student-$t$ example to illustrate that finite-moment conditions on increments alone cannot guarantee exponential concentration. This limitation is consistent with classical Fuk--Nagaev inequalities, which provide bounds for sums of independent random variables involving both exponential and polynomial terms \citep{FukNagaev1971, Fuk1973SomePI, Nagaev1979}. For regularly varying tails, large deviations of the empirical mean are dominated by a single extreme observation. This \emph{big-jump phenomenon} limits concentration to polynomial decay in the large-deviation regime \citep{Nagaev1979, mikosch1998large, bigjump2008, bakhshizadeh2023sharp}. However, \citet{aminiangeneralization} show that boundedness from above can yield exponential upper-tail concentration even when the variance is infinite. Specifically, they consider i.i.d.\ random variables satisfying $X_1\le b$ almost surely and $\|X_1\|_{L_p}<\infty$ for some $p\in(1,2]$. By writing $\overline X_n=n^{-1}\sum_{k=1}^n X_k$, they prove that
\[
\mathbb P\!\left(\overline X_n-\mathbb E X_1>t\right)
\le
\exp\!\left(
-\frac{p-1}{p}\,
\frac{nt^{p/(p-1)}}{\|X_1-b\|_{L_p}^{p/(p-1)}}
\right)
\]
for every $t>0$. For $p=2$, this bound becomes $\exp(-nt^2/(2\|X_1-b\|_{L_2}^2))$, yielding a sub-Gaussian upper-tail rate. Their result extends the classical one-sided Bernstein inequality \citep[Proposition~2.14]{wainwright2019high} to heavy-tailed distributions. We extend this result to general nonlinear functions of independent inputs by proving a McDiarmid-type inequality under a one-sided boundedness condition and suitable moment assumptions. Our bound provides exponential upper-tail concentration even when $f(\X)$ has infinite variance (\Cref{prop:McDiarmid's}). It recovers the empirical-mean bound of \citet{aminiangeneralization} with the same constants (\Cref{rmk:empirical}). We also apply it to the median-of-means (MoM) estimator, obtaining a concentration bound around its expectation.
\section{Results}\label{sec:results}
We now establish our one-sided McDiarmid-type inequality. Let $\X=(X_1,\ldots,X_n)$ be a vector of independent random variables taking values in $\mathcal X$, whose law is $P_{\X}=\bigotimes_{k=1}^n P_{X_k}$. Let $f:\mathcal X^n\to\mathbb R$ satisfy $\E[|f(\X)|]<\infty$. For each $k\in[n]\coloneqq\{1,\ldots,n\}$, let $\X_{-k}$ denote the vector obtained by removing the $k$-th coordinate from $\X$, and write $\E^{(k)}[\cdot]\coloneqq\E[\cdot\mid\X_{-k}]$.

Our assumption bounds the gap between $f$ and its essential supremum in each input. We define
\[
\sup_k f(x)
\coloneqq
\operatorname*{ess\,sup}_{z\sim P_{X_k}}
f(x_1,\ldots,x_{k-1},z,x_{k+1},\ldots,x_n),\quad x\in\mathcal X^n.
\]
Following the notation for $f_k(\X)(x)$ \citep{maurer2021concentration}, define
\[
D_k(\X)(x)
\coloneqq
\sup_k f(x)
-
f(x_1,\ldots,x_{k-1},X_k,x_{k+1},\ldots,x_n).
\]
For a deterministic $x\in\mathcal X^n$, this is a random variable whose randomness comes only from $X_k$; the remaining inputs are held fixed at $x_{-k}$. Evaluating at the random vector $\X$ gives
\[
D_k(\X)(\X)=\sup_k f(\X)-f(\X)\ge0
\qquad\text{almost surely}.
\]
This gap measures how much $f(\X)$ can increase when only its $k$-th input is changed. The upper bound $\sup_k f(\X)$ may depend on the remaining inputs $\X_{-k}$. For a norm $\|\cdot\|_a$,
\footnote{Such as sub-Gaussian, sub-exponential, Orlicz or $L_p$ norms.}
we write
\[
\|D_k(\X)\|_a(x)\coloneqq\|D_k(\X)(x)\|_a,\qquad x\in\mathcal X^n,
\]
where the norm is taken with respect to $P_{X_k}$ with $x$ held fixed. Since $D_k(\X)(x)$ depends on $x$ only through $x_{-k}$, so does $\|D_k(\X)\|_a(x)$.  Hence, $\|D_k(\X)\|_a(\X)$, the evaluation at $\X$, is a $\sigma(\X_{-k})$-measurable random variable. 
\begin{assumption}\label{ass:McDiarmid's}
There exists $p\in(1,2]$ such that
\begin{equation*}
V\coloneqq
\left\|
\sum_{k=1}^n
\|D_k(\X)\|_{L_p}^{p}(\X)
\right\|_{L_{\infty}(P_{\X})}
=
\left\|
\sum_{k=1}^n
\E^{(k)}\!\left[D_k(\X)(\X)^p\right]
\right\|_{L_{\infty}(P_{\X})}
<\infty.
\end{equation*}
\end{assumption}
Our assumption requires a uniform bound on the sum of the conditional $p$-th moments of these gaps.

\begin{theorem}\label{thm:heavy-tail}
Under \Cref{ass:McDiarmid's}, for every $t>0$,
\begin{equation}\label{eq:tail-p}
\mathbb P\!\left(f(\X)-\E[f(\X)]\ge t\right)
\le
\exp\!\left(
-\frac{p-1}{p}
\frac{t^{p/(p-1)}}{V^{1/(p-1)}}
\right).
\end{equation}
If $V=0$, then $f(\X)$ is almost surely constant and the deviation probability is zero.

\noindent Equivalently, for every $\delta\in(0,1)$, with probability at least $1-\delta$,
\begin{equation}\label{eq:confidence-heavy-tail}
f(\X)-\E[f(\X)]
\le
V^{1/p}
\left(
\frac{p}{p-1}\ln\frac1\delta
\right)^{(p-1)/p}.
\end{equation}
\end{theorem}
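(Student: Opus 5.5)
We follow the entropy method (Herbst argument), with a one-sided modified log-Sobolev bound for each coordinate. We tensorize the entropy through the reverse KL tensorization announced in the introduction, and exploit boundedness from above to handle $p\in(1,2]$. Set $Z=f(\X)$ and, for $\lambda>0$, $F(\lambda)=\ln\E e^{\lambda(Z-\E Z)}$. The aim is the bound
\[
F(\lambda)\le \frac{\lambda^p V}{p}\qquad(\lambda\ge 0).
\]
Given this, Chernoff's bound yields $\Pp(Z-\E Z\ge t)\le\exp(-\sup_\lambda(\lambda t-\lambda^pV/p))$. The optimizer is $\lambda=(t/V)^{1/(p-1)}$, and the exponent equals $\frac{p-1}{p}t^{p/(p-1)}V^{-1/(p-1)}$, which is \eqref{eq:tail-p}. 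Inverting gives \eqref{eq:confidence-heavy-tail}. If $V=0$, every $D_k(\X)(\X)=0$ a.s., so $Z=\sup_k f(\X)$ does not depend on $X_k$ for any $k$; hence $Z$ is a.s.\ constant.

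\emph{Step 1 (tensorization).} Since $Z$ may have only $p$ moments, we first work with the truncation $Z\wedge M$, or assume $e^{\lambda Z}$ is integrable; boundedness from above makes $e^{\lambda Z}$ integrable conditionally in each coordinate. Let $Q^\lambda$ be the tilted law $dQ^\lambda\propto e^{\lambda Z}dP_{\X}$. The standard relation $\lambda F'(\lambda)-F(\lambda)=\KL(Q^\lambda\|P_{\X})$ reduces the problem to bounding this divergence. Here we use the reverse KL tensorization (\Cref{thm:reverse-kl}), in the form
\[
\KL(Q\|P_{\X})\le\sum_k \E_{Q}\bigl[\KL(Q_{X_k\mid \X_{-k}}\|P_{X_k})\bigr].
\]
The right-hand side is the usual sum of conditional entropies $\sum_k\E[\mathrm{Ent}^{(k)}(e^{\lambda Z})]/\E e^{\lambda Z}$. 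If the paper's version instead places $P$ in the first slot, we apply it with the appropriate orientation.

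\emph{Step 2 (one-coordinate bound).} Fix $\X_{-k}$ and write $Z=S-D$ with $S=\sup_kf$ constant and $D=D_k\ge0$. The goal is
\[
\mathrm{Ent}^{(k)}(e^{\lambda Z})\le c_p\lambda^p\,\E^{(k)}[D^p]\;\E^{(k)} e^{\lambda Z}.
\]
Factoring out $e^{\lambda S}$, this becomes a bound on the entropy of $e^{-\lambda D}$ with $D\ge0$. We use the variational formula $\mathrm{Ent}(Y)\le\E[Y\ln Y-Y\ln u-Y+u]$ with $u=1$. This gives
\[
\mathrm{Ent}(e^{-\lambda D})\le\E[\phi(\lambda D)],\qquad \phi(x)=1-e^{-x}-xe^{-x}\ge0.
\]
For $x\ge0$ we have $\phi(x)\le x^2/2$ and $\phi(x)\le1$, so $\phi(x)\le x^p/p$ for $p\in(1,2]$; checking this elementary inequality with the right constant is a small calculation. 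The remaining issue is the normalization. The entropy must be compared with $\E e^{-\lambda D}$, not with $1$, and $\E e^{-\lambda D}$ can be small. This is the main obstacle. We expect the reverse orientation of the tensorization to handle it: it yields terms of the form $\E_P[\ldots]$ or $\E_Q$ of a $\sigma(\X_{-k})$-measurable quantity. Combined with \Cref{ass:McDiarmid's}, which bounds $\sum_k\E^{(k)}[D_k^p]\le V$ uniformly, these terms sum to at most $\lambda^pV/p$ regardless of the tilting weight.

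\emph{Step 3 (Herbst).} Step 2 gives $\lambda F'-F\le\lambda^pV/p$, that is, $(F/\lambda)'\le\lambda^{p-2}V/p$. Since $F(\lambda)/\lambda\to0$ as $\lambda\to0$, integrating yields
\[
\frac{F(\lambda)}{\lambda}\le\frac{\lambda^{p-1}V}{p(p-1)}.
\]
Getting the sharp constant $1/p$ requires the sharper route that matches the Aminian et al.\ constants. Concretely, we bound the divergence directly through a reverse-KL/Donsker--Varadhan step, $F(\lambda)\le\sum_k\sup\ln\E^{(k)}e^{-\lambda D_k+\lambda\E^{(k)}D_k}$. The scalar bound $\ln\E e^{-\lambda(D-\E D)}\le\lambda^p\E D^p/p$ for $D\ge0$ then holds via $e^{-x}\le1-x+x^p/p$, which is exactly the Aminian et al.\ one-dimensional lemma. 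We commit to this version. We remove the truncation by monotone convergence at the end.
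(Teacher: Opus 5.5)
\textbf{Verdict: the approach is right but there is a fixable gap.} Your committed route (the last paragraph of Step 3) is the paper's route. It uses reverse KL tensorization, then the conditional bound $\ln\E^{(k)}[e^{-\lambda D_k}]+\lambda\E^{(k)}[D_k]\le\frac{\lambda^p}{p}\E^{(k)}[D_k^p]$ (from $e^{-u}\le 1-u+u^p/p$ and $\ln y\le y-1$), then Chernoff. The gap is in the tensorization inequality you commit to, $F(\lambda)\le\sum_k\sup\ln\E^{(k)}[e^{-\lambda D_k+\lambda\E^{(k)}D_k}]$: it has the supremum inside the sum, and that does not give the theorem.

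It yields the bound with $\sum_k\|\E^{(k)}[D_k^p]\|_{L_\infty}$ in place of $V=\|\sum_k\E^{(k)}[D_k^p]\|_{L_\infty}$, which can be strictly larger. For $f$ the majority of three fair bits, $\E^{(k)}[D_k^p]=\frac12\mathbf 1\{x_i\neq x_j\}$ with $\{i,j\}=[3]\setminus\{k\}$, so $V=1$ while the sum of suprema is $3/2$. The content of \Cref{thm:reverse-kl} is precisely that the essential supremum sits outside the sum: $\ln\E[e^{h}]-\E[h]\le\bigl\|\sum_k(\ln\E^{(k)}[e^{h}]-\E^{(k)}[h])\bigr\|_{L_\infty(P_{\X})}$. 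Summing your conditional bound pointwise inside that norm then gives exactly $\lambda^pV/p$. Your Step 2 remark about a single tilting weight against the uniformly bounded sum is the right intuition; the displayed inequality just does not implement it. Relatedly, the inequality you call reverse tensorization in Step 1 is the classical forward subadditivity of entropy. The Herbst route built on it gives only $\lambda^pV/(p(p-1))$, the weaker exponent of \Cref{prop:impossibility}, so everything before your pivot can be dropped.

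You also never state the hypotheses of \Cref{thm:reverse-kl}, namely $h\le0$ a.s.\ and $\E|h|<\infty$. The paper obtains the sign condition from \Cref{prop:bound} ($f\le b$ a.s.\ under \Cref{ass:McDiarmid's}). It then applies the proposition to $h=\beta(f-b)$, a constant shift that changes neither side. Your truncation can substitute for \Cref{prop:bound}, but you must make it explicit:
\begin{itemize}
\item apply the proposition to $h=\lambda(f\wedge M-M)$;
\item check that $f\wedge M$ satisfies \Cref{ass:McDiarmid's} with constant at most $V$ (since $x\mapsto x\wedge M$ is nondecreasing and $1$-Lipschitz, $\sup_k(f\wedge M)=(\sup_k f)\wedge M$, and the gap $D_k^{(M)}$ of $f\wedge M$ satisfies $0\le D_k^{(M)}\le D_k$);
\item let $M\to\infty$, using monotone convergence for both $\E[e^{\lambda(f\wedge M)}]$ and $\E[f\wedge M]$.
\end{itemize}
With the supremum placed correctly and this check added, your argument is complete and matches the paper's, with truncation as a slightly more self-contained replacement for \Cref{prop:bound}.
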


\Cref{ass:McDiarmid's} implies that $f(\X)$ is almost surely bounded above (\Cref{prop:bound}), but its lower tail may remain heavy. The following proposition makes the moment requirement precise.

\begin{proposition}\label{prop:McDiarmid's}
Under \Cref{ass:McDiarmid's}, $\E[|f(\X)|^p]<\infty$. However, for every $q>p$, there exist independent inputs and a function $f$ satisfying this assumption with $\E[|f(\X)|^q]=\infty$. Specifically, let $X_1,\ldots,X_n$ be i.i.d.\ Pareto random variables with scale parameter $1$ and shape parameter $q$, and set
$
f(\X)=-\frac1n\sum_{k=1}^n X_k.
$
Then \Cref{ass:McDiarmid's} holds with exponent $p$, while the $q$-th absolute moment of $f(\X)$, with $q>p$, is infinite. 
\end{proposition}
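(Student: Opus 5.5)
The proof has two parts: the positive moment claim and the Pareto counterexample.

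\emph{Positive claim.} Write $M\coloneqq \E f(\X)$, which is finite by integrability. By \Cref{prop:bound}, $f(\X)$ is a.s.\ bounded above, so the upper tail is harmless. The issue is the lower tail. The natural tool is an Efron--Stein / martingale-type moment bound adapted to exponent $p\in(1,2]$.

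Introduce the Doob martingale $M_k\coloneqq \E[f(\X)\mid X_1,\dots,X_k]$ with increments $\Delta_k=M_k-M_{k-1}$. For $p\in(1,2]$, the von Bahr--Esseen / Burkholder inequality for martingale differences gives
\[
\E|f(\X)-M|^p\le 2\sum_{k=1}^n \E|\Delta_k|^p .
\]
It remains to control each $\E|\Delta_k|^p$ by $D_k$. Write $g_k=\E[f\mid X_1,\dots,X_k]$. Then $\Delta_k=\E[f(\X)-\E^{(k)} f(\X)\mid X_1,\dots,X_k]$, and by conditional Jensen,
\[
\E|\Delta_k|^p\le \E\big|f(\X)-\E^{(k)}f(\X)\big|^p .
\]
Now $f-\E^{(k)}f=\E^{(k)}[D_k]-D_k$, where $D_k\coloneqq D_k(\X)(\X)\ge0$, because $\sup_k f$ is $\sigma(\X_{-k})$-measurable. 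Hence
\[
\big|f-\E^{(k)}f\big|^p\le 2^{p-1}\big(D_k^p+(\E^{(k)}D_k)^p\big),
\]
and conditional Jensen gives $\E^{(k)}|f-\E^{(k)}f|^p\le 2^p\E^{(k)}[D_k^p]$. Summing over $k$ and taking expectations,
\[
\E|f(\X)-M|^p\le 2^{p+1}V<\infty,
\]
and therefore $\E|f(\X)|^p<\infty$ by Minkowski. One subtlety is that integrability of $D_k$ is not assumed separately. It follows from the assumption, since $\E^{(k)}D_k^p\le V$ a.s. The ``main obstacle'' is only justifying the martingale moment inequality for $p\le 2$, which is classical (von Bahr--Esseen for martingale differences, constant $2$). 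Alternatively, it can be avoided by using \Cref{thm:heavy-tail} for the upper tail and bounding the lower part via the above decomposition.

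\emph{Counterexample.} Take $X_k$ i.i.d.\ Pareto$(1,q)$, with density $q x^{-q-1}$ on $[1,\infty)$, and $f=-\frac1n\sum X_k$. Then the essential supremum in coordinate $k$ is attained at $z=1$, so $D_k(\X)(x)=\frac1n(X_k-1)$, independently of $x_{-k}$. Thus
\[
V=n\cdot n^{-p}\,\E(X_1-1)^p\le n^{1-p}\E X_1^p ,
\]
which is finite since $p<q$. The condition $q>p>1$ also gives $\E X_1<\infty$, so $f$ is integrable. For the infinite moment, use $|f(\X)|\ge X_1/n$ (all terms are positive), so
\[
\E|f|^q\ge n^{-q}\E X_1^q=n^{-q}\int_1^\infty q x^{-1}\,dx=\infty .
\]
This settles the second claim.
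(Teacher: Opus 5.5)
Your proof is correct, and your Pareto counterexample is the same as the paper's. For the moment claim you take a genuinely different route.

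\textbf{The paper's route.} It uses an independent copy $\Y$ and the hybrid vectors $H^{(k)}=(X_1,\ldots,X_k,Y_{k+1},\ldots,Y_n)$. Each telescoping increment satisfies the pointwise bound $|\Delta_k|\le D_k(\U_k,X_k)+D_k(\U_k,Y_k)$. Conditional Minkowski then gives $\|\Delta_k\|_{L_p}\le 2V^{1/p}$, and plain Minkowski over $k$ gives $\|f(\X)-f(\Y)\|_{L_p}\le 2nV^{1/p}$. A decoupling step, restricting to an event $\{|f(\Y)|\le M\}$ of positive probability, transfers this to $f(\X)\in L_p$.

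\textbf{Your route.} You use the Doob martingale, the identity $\Delta_k=\E[\E^{(k)}D_k-D_k\mid X_1,\ldots,X_k]$, and the von Bahr--Esseen inequality for martingale differences.

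\textbf{What yours buys.} Your bound $\E|f(\X)-\E f(\X)|^p\le 2^{p+1}V$ is dimension-free. The paper's Minkowski-over-$k$ step loses a factor $n$, giving only $\|f(\X)-\E f(\X)\|_{L_p}\le 2nV^{1/p}$ (the remark after its proof). You also only use the averaged condition $\E\big[\sum_k\E^{(k)}[D_k^p]\big]\le V$, not the $L_\infty$ bound. Since only finiteness matters, any Burkholder-type inequality would do in place of the sharp constant $2$.

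\textbf{What the paper's buys.} It is entirely elementary: triangle inequality and Minkowski, with no martingale moment inequality. It also needs only that $f(\X)$ is a.s.\ finite, so integrability comes out as a consequence. Your Doob martingale needs $\E|f(\X)|<\infty$ up front, which is fine here because it is a standing assumption.

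Two minor remarks. Your step $f-\E^{(k)}f=\E^{(k)}D_k-D_k$ implicitly uses that $\sup_k f$ is a.s.\ finite. This follows from $V<\infty$, since otherwise $D_k=\infty$ on a set of positive measure. Finally, your appeal to \Cref{prop:bound} and your closing ``alternatively'' sentence are not needed for the argument.
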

For $1<p<2$, taking $q=2$ in~\Cref{prop:McDiarmid's}, gives an example with infinite variance. Thus, the exponential upper-tail bound in \Cref{thm:heavy-tail} is compatible with an infinite second moment due to the lower tail.

Relative to \citet{aminiangeneralization}, \Cref{thm:heavy-tail} extends one-sided exponential concentration from i.i.d.\ empirical means to nonlinear functions of independent inputs, while recovering the same constants in the empirical-mean setting. Its moment assumptions allow infinite variance, beyond the finite-variance settings considered by \citet{maurer2021concentration, li2023distribution, li2024concentration_icml}. Compared with the polynomial tail bounds of \citet{lei2026stability}, our exponential upper-tail bound exploits the additional one-sided boundedness condition; finite-moment assumptions alone do not provide this guarantee. See \Cref{tab:comparison} for a comparison of the assumptions and concentration guarantees.
\begin{remark}\label{rmk:comparison}
    The General $f$ block in \Cref{tab:comparison} compares our assumption and results with other generalizations of McDiarmid's inequality. Existing works assume that $f_k(\X)(x)$ has a bounded norm. We instead assume that $D_k(\X)(x)$ has a bounded $L_p$-norm, which allows $f$ to have infinite variance while requiring $f$ to be bounded above. In exchange for this restriction, our bound on the upper tail decays exponentially in $t$. In contrast, the bound of \citet[Theorem 2.6]{lei2026stability}, the only existing result, to our knowledge, that allows $f$ to have infinite variance, decays only polynomially.
\end{remark}
\subsection{Reverse KL tensorization and proof of the main theorem}\label{sec:reverse_kl}
The key proof technique for \Cref{thm:heavy-tail} is a novel tensorization of the KL divergence (\Cref{thm:reverse-kl}), which interchanges the two arguments of the divergence relative to the classical entropy tensorization \citep[Theorem~6]{maurer2012thermodynamics}. Classical tensorization underlies the proofs of the sub-Gaussian and sub-exponential generalizations of McDiarmid's inequality \citep{maurer2021concentration} and of the Gaussian log-Sobolev inequality \citep{raginsky2013concentration}.

For an integrable function $h:\mathcal{X}^n\rightarrow\mathbb R$ satisfying $\mathbb E[e^{h(\X)}]<\infty$, we define the exponentially tilted probability measure $P_h$ on $\mathcal{X}^n$ by
\begin{equation}\label{eq:tilted_measure}
\frac{dP_h}{dP_{\X}}(x) \coloneqq \frac{e^{h(x)}}{\mathbb E[e^{h(\X)}]}.
\end{equation}
Although $P_{\X}$ is a product measure, $P_h$ generally is
not, since the function $h$ may introduce dependence between coordinates.

In classical concentration arguments based on the entropy method, one seeks to bound the quantity $\KL(P_{\gamma f}\Vert P_{\X})$ for $0\le\gamma\le\beta$ because it controls the centered log-moment generating function through the identity central to the Herbst method~\citep[Theorem~1]{maurer2012thermodynamics}
\begin{equation}\label{eq:Herbst}
  \ln\mathbb E\!\left[
    e^{\beta(f(\X)-\mathbb E[f(\X)])}
  \right]
  =
  \beta\int_0^\beta
  \frac{\KL(P_{\gamma f}\Vert P_{\X})}{\gamma^2}\,d\gamma.
\end{equation}
A bound on this quantity then yields a concentration inequality
via the Chernoff argument.

Entropy tensorization provides a way to bound this divergence on
the full product space using conditional divergences involving
one coordinate at a time \citep[Theorem~6]{maurer2012thermodynamics}.
By fixing the remaining coordinates, one can apply scalar
inequalities and combine the resulting estimates to obtain concentration for general, possibly nonlinear functions of independent random variables.
Under the assumptions stated below, we establish an inequality bounding $\KL(P_{\X}\Vert P_{\beta f})$ by the essential supremum of the sum of the conditional reverse divergences. This direction is particularly useful for concentration because
\[
  \ln\mathbb E[e^{h(\X)}]-\mathbb E[h(\X)]=\KL(P_{\X}\Vert P_h).
\]
Taking $h=\beta f$ therefore gives
\[
  \ln\mathbb E\!\left[
    e^{\beta(f(\X)-\mathbb E[f(\X)])}\right]=\KL(P_{\X}\Vert P_{\beta f}).
\]
Thus, reverse KL tensorization directly bounds the centered log-moment generating function, bypassing Herbst's argument. This can avoid losses
in constants introduced by intermediate entropy estimates, while retaining
the reduction to conditional quantities involving one coordinate at a time. A concentration inequality then follows from the Chernoff argument.

We apply this approach to heavy-tailed random variables. When $f$ is
bounded above, its conditional exponential moments can be controlled
using moments of order $p$, for $p\in(1,2]$,
of its distance from its conditional supremum. Uniform control of the sum of these conditional moments then yields an upper-tail
concentration inequality for $f(\X)$, even when its second moment is infinite. For $P_{\X_{-k}}$-a.e.\ $x_{-k}\in\mathcal{X}^{n-1}$, we define the tilted probability measure $P_{h,k}^{x_{-k}}$ on $\mathcal{X}$ by
\[\frac{dP_{h,k}^{x_{-k}}}{dP_{X_k}}(x_k)
  \coloneqq\frac{e^{h(x_k,x_{-k})}}
       {\E[e^{h(\X)}\mid\X_{-k}=x_{-k}]},\]
where $(x_k,x_{-k})$ denotes the point of $\mathcal{X}^n$ whose $k$-th coordinate is $x_k$ and whose remaining coordinates are given by $x_{-k}$.
\begin{proposition}\label{thm:reverse-kl}
Under the assumptions $h(\X)\le0\text{ a.s.}$ and
  $\mathbb E[|h(\X)|]<\infty$,
\begin{equation*}
  \KL(P_{\X}\Vert P_h)
  \le
  \esssup_{x\sim P_{\X}}
  \sum_{k=1}^n
  \KL\!\left(P_{X_k}\Vert P_{h,k}^{x_{-k}}\right).
\end{equation*}
Equivalently,
\begin{equation*}
  \ln\mathbb E[e^{h(\X)}]-\mathbb E[h(\X)]
  \le
  \left\|
    \sum_{k=1}^n
    \left(
      \ln\mathbb E^{(k)}[e^{h(\X)}]
      -\mathbb E^{(k)}[h(\X)]
    \right)
  \right\|_{L_{\infty}(P_{\X})}.
\end{equation*}
\end{proposition}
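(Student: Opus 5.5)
The plan is to argue by induction on $n$, peeling off one coordinate at a time, and to combine two tools: the chain rule for the centered log-moment generating function, and convexity of the KL divergence in its second argument. For $n=1$ the statement is an identity. For the inductive step I fix the last coordinate and set
\[
g(x_{<n})\coloneqq \ln \E\big[e^{h(\X)}\mid X_1=x_1,\dots,X_{n-1}=x_{n-1}\big]=\ln \E_{X_n}\big[e^{h(x_{<n},X_n)}\big],
\]
which uses independence of $X_n$ from the rest. Since $h\le 0$, also $g\le 0$, and $g$ is integrable because $\E[h]\le \E[g]\le 0$ by Jensen. So $g$ satisfies the hypotheses of the statement on $\mathcal X^{n-1}$. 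The telescoping identity
\[
\ln\E[e^{h}]-\E[h]=\big(\ln \E[e^{g}]-\E[g]\big)+\E\big[\ln\E^{(n)}[e^{h}]-\E^{(n)}[h]\big]
\]
holds because $\E[g]-\E[h]=\E[\KL(P_{X_n}\Vert P^{x_{-n}}_{h,n})]$.

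Next I apply the induction hypothesis to $g$. I then compare each one-coordinate divergence of $g$ with those of $h$. Fix $k<n$ and $x_{-k}$ restricted to the first $n-1$ coordinates. The tilted measure $P_{g,k}$ on $\mathcal X$ is the $x_k$-marginal of the tilt of $P_{X_k}\otimes P_{X_n}$ by $e^{h}$. Hence it is the mixture $\int Q(dx_n)\,P^{x_{-k}}_{h,k}$, where $dQ(x_n)\propto \E_{X_k}[e^{h(\dots,X_k,\dots,x_n)}]\,dP_{X_n}(x_n)$. Convexity of $\KL(P_{X_k}\Vert\cdot)$ then gives
\[
\KL(P_{X_k}\Vert P_{g,k})\le \int Q(dx_n)\,\KL(P_{X_k}\Vert P_{h,k}^{x_{-k}}).
\]
Summing over $k<n$ and taking the essential supremum bounds the first bracket by $\esssup_{x}\sum_{k<n}\KL(P_{X_k}\Vert P_{h,k}^{x_{-k}})$, since a $Q$-average is at most an essential supremum. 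Here I use that $Q\ll P_{X_n}$.

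The main obstacle is that this leaves
\[
\esssup_x\sum_{k<n}(\cdots)+\E[\KL_n],
\]
that is, a supremum plus an average. The target is a single $\esssup_x\sum_{k\le n}(\cdots)$, and the naive bound is sum-of-sups rather than sup-of-sum. To close the gap I would strengthen the induction hypothesis to a weighted form. For any nonnegative $\sigma(\X_{-k})$-measurable additive remainder $r$, the hypothesis should bound $\ln\E[e^{g}]-\E[g]+\E_{\mu}[r]$ by $\esssup\big(\sum_k\KL_k+r\big)$, where $\mu$ is the appropriate tilted measure. I would carry the $\KL_n(x_{<n})$ term along as such a remainder, so that at every stage the remainder sits inside the supremum.

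The hypothesis $h\le0$, propagated as $g\le0$, should enter exactly here. It gives $dQ/dP_{X_n}\le 1/\E_{X_n}e^{h}$-type domination, which keeps all tilted measures mutually absolutely continuous with $P_{\X}$, so that essential suprema under the tilts coincide with those under $P_{\X}$. If this bookkeeping becomes unwieldy, my fallback is the dual representation
\[
\ln\E e^{h}=\sup_{R}\big(\E_R h-\KL(R\Vert P_{\X})\big),
\]
combined with chain-rule tensorization of $\KL(R\Vert P_{\X})$ along the conditionals of $R$, applied coordinatewise.
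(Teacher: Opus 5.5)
Your chain-rule identity and the one-coordinate convexity bound are both correct. The argument still has a genuine gap, and it shows up already in the first bracket, before the ``supremum plus average'' issue you flag. Write $F_k(x_{-k})\coloneqq\KL(P_{X_k}\Vert P_{h,k}^{x_{-k}})$. The mixing measure in your convexity step depends on $k$: it is $Q_k$ with $dQ_k/dP_{X_n}(x_n)\propto\E_{X_k}[e^{h(\ldots,X_k,\ldots,x_n)}]$, where $x_k$ is integrated out and the other coordinates are held fixed. Summing over $k<n$ gives $\sum_{k<n}\int F_k\,dQ_k$. Your reasoning bounds this by $\sum_{k<n}\esssup_{x_n}F_k$, not by $\esssup_{x_n}\sum_{k<n}F_k$: ``a $Q$-average is at most an essential supremum'' gives sup-of-sum only if one $Q$ serves every $k$.

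The same defect blocks your weighted induction. A fixed peeling order gives each coordinate its own weighting measure on $x_{-k}$, and these are not the marginals of a single probability measure. For $n=3$, unroll your argument with $g_S\coloneqq\E[e^{h(\X)}/\E e^{h(\X)}\mid X_S]$. The weights have densities $g_{\{2,3\}}$, $g_{\{1,3\}}/g_{\{1\}}$ and $1$ on $(x_2,x_3)$, $(x_1,x_3)$, $(x_1,x_2)$. The first and third already have different $x_2$-marginals in general ($g_{\{2\}}$ versus $1$). So no single measure has these as its $x_{-k}$-marginals, and your strengthened hypothesis, with $\mu$ left unspecified, has nothing to close on. (For $n=2$ the two weights live on disjoint coordinates and a product coupling works, which makes the problem easy to miss.)

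The missing idea is a symmetrization over orderings that makes the per-coordinate weights consistent. The paper proceeds as follows.
\begin{enumerate}
\item It writes $\KL(P_{\X}\Vert P_h)=\E[(\rho-1)h(\X)]-\KL(P_h\Vert P_{\X})$ with $\rho=g_{[n]}$.
\item It expands $\rho-1$ as a telescoping sum of conditional expectations along an ordering and averages over all $n!$ orderings. This gives $\rho-1=\sum_k(b_k-\ell_k)$ with $\ell_k=\sum_{S\not\ni k}w_{k,S}\,g_S$ and $b_k=\sum_{S\not\ni k}w_{k,S}\,g_{S\cup\{k\}}$.
\item Conditional Donsker--Varadhan with density $b_k/\ell_k$ bounds $\E[(b_k-\ell_k)h]$ by $\E[b_k\ln(b_k/\ell_k)]+\E[\ell_kF_k]$.
\item Joint convexity plus the telescoping Pythagorean identity bounds $\sum_k\E[b_k\ln(b_k/\ell_k)]$ by $\KL(P_h\Vert P_{\X})$, which cancels.
\item The decisive point: $\ell_k=\E^{(k)}[\ell]$ for the \emph{single} density $\ell=\sum_T c_Tg_T$, $c_T=|T|!(n-|T|)!/(n+1)!$. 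Hence $\sum_k\E[\ell_kF_k]=\E[\ell\sum_kF_k]\le\|\sum_kF_k\|_{L_\infty(P_{\X})}$.
\end{enumerate}
Your fallback (Donsker--Varadhan plus a chain rule along one ordering) points toward the single-ordering version of this argument. Without the averaging it meets the same inconsistency: the weights $g_{S_k(\pi)}$ again have incompatible marginals.

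Finally, $h\le0$ is not needed for absolute continuity, since $e^{h}>0$ gives that automatically. Its role is to give $g_S\le1/\E e^{h(\X)}$, so that every $\E[g_S\ln g_S]$ is finite and the entropy telescoping is legitimate.
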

The proof decomposes the centered tilted density $dP_h/dP_{\X}-1$ by conditioning successively on the independent random variables and averaging the resulting decompositions over all possible orderings. We then apply the conditional Donsker--Varadhan inequality, using joint convexity and telescoping entropy differences to control the resulting KL terms. After canceling the forward KL contribution, the reverse divergence is bounded by an average of the sum of conditional reverse divergences under an auxiliary probability measure. Bounding this average by the essential supremum yields the result. 

\begin{remark}\label{rmk:kl_comparison}
A direct application of forward KL tensorization followed by the Herbst argument yields the same dependence on $t$, but a worse constant in the exponent for $1<p<2$. See Appendix~\ref{app:comparison_kl} for the derivation and a discussion of the limitations of this approach.
\end{remark}
We now prove \Cref{thm:heavy-tail} by combining reverse KL tensorization with a scalar inequality that controls conditional exponential moments using the $p$-th moments of $D_k(\X)$.
\begin{proofof}{\Cref{thm:heavy-tail}}
For $u\ge0$ and $1<p\le2$,
\begin{equation}\label{eq:scalar}
  e^{-u}\le1-u+\frac{u^p}{p}
\end{equation}
as shown in \citet[Lemma B.8]{behnamnia2025logsumexponential}. Indeed, the two sides are equal at $u=0$ and 
\[
\frac{d}{du}\left(1-u+\frac{u^p}{p}-e^{-u}\right)=-1+u^{p-1}+e^{-u}\ge0.
\]
For $0\le u\le1$, we use $u^{p-1}\ge u$ and $e^{-u}\ge1-u$. For $u\ge1$, we use $u^{p-1}\ge1$. 

Fix $\beta>0$. Since $f(\X)=\sup_kf(\X)-D_k(\X)(\X)$ and $\sup_kf(\X)$ does not depend on $X_k$,
\begin{align}
  \ln\mathbb E^{(k)}[e^{\beta f(\X)}]
  -\beta\mathbb E^{(k)}[f(\X)]
  &=
  \ln\mathbb E^{(k)}[e^{-\beta D_k(\X)(\X)}]
  +\beta\mathbb E^{(k)}[D_k(\X)(\X)]\notag\\
  &\le
  \frac{\beta^p}{p}
  \mathbb E^{(k)}[D_k(\X)(\X)^p]\text{ almost surely}.
  \label{eq:local-heavy-tail}
\end{align}
For the last inequality \eqref{eq:local-heavy-tail}, we apply \eqref{eq:scalar} with $u=\beta D_k(\X)(\X)\ge0$ a.s. and take expectations
\begin{equation*}
  \E^{(k)}\big[e^{-\beta D_k(\X)(\X)}\big]\le1-\beta\,\E^{(k)}[D_k(\X)(\X)]+\frac{\beta^p}{p}\,\E^{(k)}[D_k(\X)(\X)^p].
\end{equation*}
The left-hand side is positive, so the right-hand side is also positive. Since $\ln y\le y-1$ for $y>0$, 
\begin{align*}
    \ln\mathbb E^{(k)}[e^{-\beta D_k(\X)(\X)}]&\le\ln\left(1-\beta\,\E^{(k)}[D_k(\X)(\X)]+\frac{\beta^p}{p}\,\E^{(k)}[D_k(\X)(\X)^p]\right)\\
    &\le-\beta\,\E^{(k)}[D_k(\X)(\X)]+\frac{\beta^p}{p}\,\E^{(k)}[D_k(\X)(\X)^p].
\end{align*}
Adding $\beta\E^{(k)}[D_k(\X)(\X)]$ to both sides proves \eqref{eq:local-heavy-tail}. By \Cref{prop:bound}, there is a constant $b$ with $\beta (f-b)\le0$ a.s. Applying the reverse KL tensorization (\Cref{thm:reverse-kl}) to $h=\beta (f-b)$,
\begin{align*}
    \ln\mathbb E[e^{\beta(f(\X)-\mathbb E[f(\X)])}]&=
\ln\mathbb E[e^{\beta\left((f(\X)-b)-\mathbb E[f(\X)-b]\right)}]\\
  &\le\Big\|\sum_{k=1}^n\Big(\ln\E^{(k)}\big[e^{\beta f(\X)}\big]-\beta\,\E^{(k)}[f(\X)]\Big)\Big\|_{L_{\infty}(P_{\X})}\\
  &\le\frac{\beta^p}{p}\Big\|\sum_{k=1}^n\E^{(k)}\big[D_k(\X)(\X)^p\big]\Big\|_{L_{\infty}(P_{\X})}
  =\frac{V\beta^p}{p}.
\end{align*}
The last inequality is by \eqref{eq:local-heavy-tail}. Combined with Chernoff's inequality,
\begin{equation}\label{eq:reverse_chernoff}
  \mathbb P(f(\X)-\mathbb E[f(\X)]\ge t)
  \le
   \inf_{\beta>0}
  \exp\!\left(-\beta t+\ln\mathbb E[e^{\beta(f(\X)-\mathbb E[f(\X)])}]\right)
  \le
  \inf_{\beta>0}
  \exp\!\left(-\beta t+\frac{V\beta^p}{p}\right).
\end{equation}
If $V=0$, $f$ is almost surely constant, so the tail probability is zero. For $V>0$, taking $\beta=(t/V)^{1/(p-1)}$ proves \eqref{eq:tail-p}. The equivalent form \eqref{eq:confidence-heavy-tail} follows by solving for $t$.
\end{proofof}
\subsection{Applications}
We apply our concentration inequality (\Cref{thm:heavy-tail}) to provide deviation bounds for the empirical mean and the median-of-means estimator where the samples are independent, almost surely bounded above, and have finite $p$-th moments for some $p\in(1,2]$. 
\subsubsection{Empirical Mean Estimator}\label{sec:empirical_mean}
We first apply \Cref{thm:heavy-tail} to the empirical mean of independent observations that need not be identically distributed. Let $X_1,\ldots,X_n$ satisfy $X_k\le b_k$ almost surely for deterministic constants $b_k$, and suppose that
\[
M_{k,p}\coloneqq\mathbb E[(b_k-X_k)^p]<\infty
\]
for some $p\in(1,2]$. We define the empirical average $\overline X_n$ and the averaged moment $\overline M_n$ as 
\[
\overline X_n\coloneqq\frac1n\sum_{k=1}^n X_k,
\qquad
\overline M_n\coloneqq\frac1n\sum_{k=1}^n M_{k,p}.
\]
The next corollary bounds the upper-tail deviations of $\overline X_n$ in terms of the averaged moment $\overline M_n$.
\begin{corollary}\label{col:gen_empirical}
Assume $\overline M_n>0$. For every $t>0$, 
\begin{equation}\label{eq:mean-non-iid}
  \mathbb P\!\left(\overline X_n-\mathbb E[\overline X_n]\ge t\right)
  \le
  \exp\!\left(
    -\frac{p-1}{p}
    \frac{nt^{p/(p-1)}}{\overline M_n^{1/(p-1)}}
  \right).
\end{equation}
Equivalently, for every $\delta\in(0,1)$, with probability at least $1-\delta$,
\begin{equation}
\label{eq:non-iid-empirical-confidence}
  \overline X_n-\mathbb E[\overline X_n]
  \le \overline M_n^{1/p}
  \left(\frac{p}{p-1}\frac{\ln(1/\delta)}{n}\right)^{(p-1)/p}.
\end{equation}
\end{corollary}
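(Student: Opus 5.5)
The plan is to apply \Cref{thm:heavy-tail} to $f(\X)=\overline X_n=\frac1n\sum_{k=1}^n X_k$ and compute the quantity $V$ of \Cref{ass:McDiarmid's} explicitly. Integrability of $f$ is immediate: $X_k\le b_k$ and $\E[(b_k-X_k)^p]<\infty$ with $p>1$ give $\E|X_k|<\infty$.

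First, identify the gaps $D_k$. Fix $x_{-k}$. The map $z\mapsto f(x_1,\dots,z,\dots,x_n)=\frac1n z+\frac1n\sum_{j\ne k}x_j$ is increasing in $z$. Hence
\[
\sup_k f(x)=\frac1n\,\esssup_{z\sim P_{X_k}} z+\frac1n\sum_{j\ne k}x_j .
\]
Write $s_k\coloneqq\esssup X_k$. We have $s_k\le b_k$, and $s_k$ is finite since $X_k$ is integrable. Then $D_k(\X)(x)=\frac1n(s_k-X_k)$, which does not depend on $x_{-k}$ at all. Consequently
\[
\E^{(k)}[D_k(\X)(\X)^p]=n^{-p}\,\E[(s_k-X_k)^p]\le n^{-p}\,\E[(b_k-X_k)^p]=n^{-p}M_{k,p}.
\]
The inequality holds because $0\le s_k-X_k\le b_k-X_k$ almost surely. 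Summing over $k$ gives a deterministic bound
\[
V\le n^{-p}\sum_{k=1}^n M_{k,p}=n^{1-p}\,\overline M_n<\infty,
\]
so \Cref{ass:McDiarmid's} holds.

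Next, plug this into \eqref{eq:tail-p}. The right-hand side of \eqref{eq:tail-p} is increasing in $V$, so we may replace $V$ by the upper bound $n^{1-p}\overline M_n$. If $V=0$, the theorem gives probability zero and the bound holds trivially; the assumption $\overline M_n>0$ keeps the target expression well defined. Using
\[
V^{1/(p-1)}\le \bigl(n^{1-p}\overline M_n\bigr)^{1/(p-1)}=n^{-1}\,\overline M_n^{1/(p-1)},
\]
the exponent becomes $-\frac{p-1}{p}\,n\,t^{p/(p-1)}/\overline M_n^{1/(p-1)}$, which is exactly \eqref{eq:mean-non-iid}.

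Finally, set the right-hand side of \eqref{eq:mean-non-iid} equal to $\delta$ and solve for $t$. This gives $t=\overline M_n^{1/p}\bigl(\frac{p}{p-1}\frac{\ln(1/\delta)}{n}\bigr)^{(p-1)/p}$, which is \eqref{eq:non-iid-empirical-confidence}. The same value also follows directly from \eqref{eq:confidence-heavy-tail} with $V^{1/p}\le n^{(1-p)/p}\overline M_n^{1/p}$.

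There is no real obstacle here. The only point needing care is that the theorem's gap uses the essential supremum $s_k$ rather than $b_k$, and this is handled by the monotonicity $s_k-X_k\le b_k-X_k$.
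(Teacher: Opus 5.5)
Your proposal is correct and follows the paper's proof: the paper also bounds $D_k(\X)(\X)\le (b_k-X_k)/n$, uses independence to get $V\le n^{1-p}\overline M_n$, and then applies \Cref{thm:heavy-tail} and inverts the tail bound. Your identification of the gap as $(s_k-X_k)/n$ with $s_k=\esssup X_k\le b_k$, and your use of the fact that the bound is increasing in $V$, spell out steps the paper leaves implicit.
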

\begin{remark}\label{rmk:empirical} If we assume that the samples are identically distributed in addition to their independence, i.e., $X_1,\ldots, X_n$ are i.i.d.\ copies of $X$, where $X$ is a random variable satisfying $X\le b$ a.s. and $0<M_p\coloneqq\mathbb E[(b-X)^p]<\infty$, our assumption coincides with the assumption of \citet[Theorem 4.3]{aminiangeneralization}. By substituting $\overline M_n=M_p$ and $\E[\overline X_n]=\E[X]$ into \Cref{col:gen_empirical}, we recover the same bound as in \citet[Theorem 4.3]{aminiangeneralization}. See \Cref{tab:comparison} for a comparison.
\end{remark}
\begin{remark}\label{rmk:empirical_comparison} 
\citet[Theorem 2.6]{lei2026stability} followed by Markov's inequality, gives, for every $t>0$,
\begin{equation}\label{eq:polynomial-optimized}
\Prob(\overline X_n-\E\overline X_n\ge t) \le \min\bigg\{1,\; \min_{Q\ge1} \bigg[ 2(4Q)^p\frac{\overline M_n}{t^pn^{(p-1)}} + 2\left( 2e\,4^pQ^{p-1}\frac{\overline M_n}{t^pn^{(p-1)}} \right)^Q \bigg]\bigg\}.
\end{equation}
Each step of the derivation is sharp (see Appendix \ref{app:comparison}.) Nevertheless, for every $t>0$, the right-hand side of
\eqref{eq:polynomial-optimized} is strictly larger than that of the one-sided exponential bound \eqref{eq:mean-non-iid} in \Cref{col:gen_empirical}. 
\end{remark}
\subsubsection{Median-of-Means Estimator}\label{sec:mom}
We next consider the median-of-means (MoM) estimator as a nonlinear application of our theorem. We study two distinct deviations: fluctuations around the estimator's own expectation, allowing non-identically distributed observations and unequal block sizes, and fluctuations around the population mean for i.i.d.\ observations partitioned into equal-sized blocks.

\noindent Let $K\ge3$ be odd, and let $m_1,\ldots,m_K$ be positive integers with $n=\sum_{j=1}^K m_j$. Consider mutually independent observations $\{X_{ji}:1\le j\le K,\ 1\le i\le m_j\}$ satisfying $X_{ji}\le b_{ji}$ almost surely for deterministic constants $b_{ji}$. Assume that, for some $p\in(1,2]$,
\[
M_{ji,p}\coloneqq\E[(b_{ji}-X_{ji})^p]<\infty.
\]
Define the block means and the MoM estimator by
\[
Y_j\coloneqq\frac1{m_j}\sum_{i=1}^{m_j}X_{ji},
\qquad
\widehat\mu_{\mathrm{MoM}}\coloneqq\operatorname{med}(Y_1,\ldots,Y_K).
\]

\paragraph{Concentration around the estimator's expectation.}
We first apply \Cref{thm:heavy-tail} directly to $\widehat\mu_{\mathrm{MoM}}$ as a function of all $n$ observations. The expectations $\E[Y_j]$ of the block means need not coincide, and the following result controls the estimator's fluctuations around $\E[\widehat\mu_{\mathrm{MoM}}]$. Bounds on the deviation probability $\Prob(\widehat\mu_{\mathrm{MoM}}-\E\widehat\mu_{\mathrm{MoM}}\ge t)$ for the MoM and other robust estimators are often needed in the analysis of stochastic optimization algorithms relying on \emph{biased} gradient estimators \citep{nazin2019algorithms, puchkin24a, Parletta2024}. The analysis of such algorithms typically reduces to controlling the bias of the gradient
estimator and its concentration around its own mean, each handled separately. We are not aware of any prior exponential bounds on $\Prob(\widehat\mu_{\mathrm{MoM}}-\E\widehat\mu_{\mathrm{MoM}}\ge t)$ in a heavy-tailed setting.
\begin{corollary}
\label{prop:non-iid-direct-mom}
Assume 
\[V_{\mathrm{mom}}\coloneqq\sum_{j=1}^K\frac{1}{m_j^p}\sum_{i=1}^{m_j}M_{ji,p}>0.\]
For every $t>0$,
\begin{equation}
\label{eq:non-iid-direct-mom}
  \Prob(\widehat\mu_{\mathrm{MoM}}-\E \widehat\mu_{\mathrm{MoM}}\ge t)
  \le
  \exp\!\left(
    -\frac{p-1}{p}\,
    \frac{t^{p/(p-1)}}{V_{\mathrm{mom}}^{1/(p-1)}}
  \right).
\end{equation}
\end{corollary}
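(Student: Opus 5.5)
We apply \Cref{thm:heavy-tail} to $f(\X)\coloneqq\operatorname{med}(Y_1,\ldots,Y_K)$, viewed as a function of the $n$ independent inputs $X_{ji}$. It then suffices to show that \Cref{ass:McDiarmid's} holds with $V\le V_{\mathrm{mom}}$. Since the right-hand side of \eqref{eq:tail-p} is increasing in $V$, the bound \eqref{eq:non-iid-direct-mom} follows. Integrability of $f$ is immediate: $|\operatorname{med}(y)|\le\sum_j|Y_j|$ and each $X_{ji}$ is integrable, because it is bounded above with a finite $p$-th moment of $b_{ji}-X_{ji}$.

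\textbf{Coordinatewise gap bound.} Fix a coordinate $(j,i)$ and all other inputs. As a function of $x_{ji}$, the block mean $Y_j$ is affine with slope $1/m_j$, and the other $Y_{j'}$ are fixed. The median is coordinatewise nondecreasing and $1$-Lipschitz in each argument with respect to $|\cdot|$. Hence $x_{ji}\mapsto f$ is nondecreasing and
\[
f(\ldots,b_{ji},\ldots)-f(\ldots,x_{ji},\ldots)\le \frac{b_{ji}-x_{ji}}{m_j}.
\]
Because $X_{ji}\le b_{ji}$ a.s., monotonicity gives $\sup_{ji}f(x)\le f(\ldots,b_{ji},\ldots)$. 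Therefore
\[
0\le D_{ji}(\X)(x)\le \frac{b_{ji}-X_{ji}}{m_j}
\]
pointwise, for every $x_{-ji}$. I will state the needed median facts as a short lemma. Monotonicity is clear. For the Lipschitz property, if a single argument of $\operatorname{med}$ increases by $\delta\ge0$, the order statistic of rank $(K+1)/2$ increases by at most $\delta$, since every order statistic of the new vector is sandwiched between the old one and the old one plus $\delta$.

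\textbf{Summing moments.} Raising the gap bound to the power $p$ and taking $\E^{(ji)}$ gives
\[
\E^{(ji)}[D_{ji}^p]\le M_{ji,p}/m_j^p,
\]
uniformly in the remaining coordinates. Summing over all $(j,i)$ yields
\[
\sum_{j,i}\E^{(ji)}[D_{ji}(\X)(\X)^p]\le \sum_j m_j^{-p}\sum_i M_{ji,p}=V_{\mathrm{mom}}
\]
almost surely. Hence $V\le V_{\mathrm{mom}}<\infty$, and \Cref{thm:heavy-tail} applies. If $V=0$, the deviation probability is zero and the claim holds trivially. Otherwise, we substitute the bound $V\le V_{\mathrm{mom}}$ into \eqref{eq:tail-p}.

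The main obstacle is the step $\sup_{ji}f\le f(\ldots,b_{ji},\ldots)$. It depends on the median being monotone, so that the essential supremum over $x_{ji}$ is reached at or below the deterministic upper bound $b_{ji}$. It also uses the fact that the essential supremum is taken with respect to $P_{X_{ji}}$, so null sets where $X_{ji}>b_{ji}$ play no role. Once the one-argument Lipschitz and monotonicity properties of the median are established, the rest is bookkeeping.
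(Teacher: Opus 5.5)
Your proposal is correct and follows the paper's proof essentially step for step. You bound each coordinatewise gap by $D_{ji}\le (b_{ji}-X_{ji})/m_j$ using monotonicity and the one-argument $1$-Lipschitz property of the median, sum the conditional $p$-th moments to get $V\le V_{\mathrm{mom}}$, and invoke \Cref{thm:heavy-tail}. One small wording fix: the gap bound holds $P_{X_{ji}}$-almost surely rather than pointwise, which is all that is needed.

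The only cosmetic difference is that the paper first recenters to $f=\widehat\mu_{\mathrm{MoM}}-\operatorname{med}(b_1,\ldots,b_K)\le 0$. You correctly skip this, since \Cref{thm:heavy-tail} already supplies the upper bound internally through \Cref{prop:bound}.
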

\begin{remark}\label{rmk:mom_same_non_iid}
    Notice that if all the blocks have the same size $m_j=m$ in \Cref{prop:non-iid-direct-mom}, the deviation bound becomes
    \begin{equation}
\label{eq:equal-size-mom}
  \Prob(\widehat\mu_{\mathrm{MoM}}-\E \widehat\mu_{\mathrm{MoM}}\ge t)
  \le
  \exp\!\left(
    -\frac{p-1}{p}\,
    \frac{n}{\overline M_n^{1/(p-1)}}\left(\frac{t}{K}\right)^{p/(p-1)}
  \right),
\end{equation}
where $\overline M_n\coloneqq\frac 1n \sum_{j=1}^K\sum_{i=1}^mM_{ji,p}~(n=Km)$ is the averaged moment. 
\end{remark}
\paragraph{Concentration around the population mean.}
We then consider the case where the samples are i.i.d.\ copies of a random variable $X$ satisfying $X\le b$
a.s.\ and $0<M_p\coloneqq\E[(b-X)^p]<\infty$ for some $p\in(1,2]$. The samples are divided into $K$ blocks of the same size $m=n/K.$ We bound the deviation of $\widehat\mu_{\mathrm{MoM}}$ from the common population mean $\mu\coloneqq\E[X]$. This deviation is a quantity studied for the analysis of robust risk minimization \citep{MoMsurvey2020} or heavy-tailed bandits \citep{bubeck2013bandits}.
\begin{corollary}\label{col:mom_rate}
Let $X_1,\ldots,X_n$ be i.i.d.\ copies of $X$. For $t>\left(\frac{p}{p-1}\right)^{\frac{p-1}{p}} M_p^{1/p}\left(\frac{1}{m}\ln\frac{2K}{K+1}\right)^{\frac{p-1}{p}},$
\begin{equation*}
  \mathbb P\!\left(\widehat\mu_{\mathrm{MoM}}-\mu\ge t\right)
  \le
  \exp\!\left(-K d\left(\frac{K+1}{2K} \,\middle\Vert\, \overline q_t\right)\right),
\end{equation*}
where $d(\alpha\Vert \beta)\coloneqq \alpha\ln\frac{\alpha}{\beta}+(1-\alpha)\ln\frac{1-\alpha}{1-\beta}$ is the binary KL divergence, and $\overline q_t
  \coloneqq\exp\!\left(
    -\frac{p-1}{p}
    \frac{mt^{p/(p-1)}}{M_p^{1/(p-1)}}
  \right)$. If $K\ge\frac{\ln(1/\delta)}{\ln(2/\sqrt3)}$, with probability at least $1-\delta$,
\begin{equation}\label{eq:mom-confidence}
  \widehat\mu_{\mathrm{MoM}}-\mu
  \le
  M_p^{1/p}
  \left(
    \frac{p\ln4}{p-1}\frac{K}{n}
  \right)^{(p-1)/p}.
\end{equation}

\end{corollary}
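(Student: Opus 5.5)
The argument reduces the problem to a binomial tail over the blocks. Write $\widehat\mu_{\mathrm{MoM}}=\med(Y_1,\ldots,Y_K)$ with $K$ odd. The event $\widehat\mu_{\mathrm{MoM}}-\mu\ge t$ forces at least $(K+1)/2$ of the block means to satisfy $Y_j-\mu\ge t$. The blocks are built from disjoint sets of i.i.d.\ samples, so the indicators $B_j\coloneqq\mathbf 1\{Y_j-\mu\ge t\}$ are i.i.d.\ Bernoulli with parameter $q_t\coloneqq\Pp(Y_1-\mu\ge t)$. Each block mean is an empirical mean of $m$ i.i.d.\ copies of $X$ with $X\le b$, so \Cref{col:gen_empirical} (equivalently \Cref{rmk:empirical}) with $n$ replaced by $m$ and $\overline M_m=M_p$ gives $q_t\le\overline q_t$.

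Next, bound $\Pp\big(\sum_j B_j\ge \frac{K+1}{2}\big)$. The binomial upper tail is monotone in its success parameter, so it suffices to bound it for $\mathrm{Bin}(K,\overline q_t)$. Whenever $\overline q_t<\frac{K+1}{2K}$, the Chernoff--Hoeffding bound in relative-entropy form applies:
\[
\Pp\Big(\textstyle\sum_j B_j\ge \alpha K\Big)\le \exp\!\big(-K\,d(\alpha\Vert \overline q_t)\big),\qquad \alpha=\tfrac{K+1}{2K}.
\]
The condition $\overline q_t<\alpha$ is exactly $\frac{p-1}{p}\frac{m t^{p/(p-1)}}{M_p^{1/(p-1)}}>\ln\frac{2K}{K+1}$. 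Solving this for $t$ gives the stated threshold, which proves the first claim. The monotonicity of the binomial tail could be bypassed by coupling $B_j\le \tilde B_j$ with $\tilde B_j\sim\mathrm{Bernoulli}(\overline q_t)$.

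For the confidence bound \eqref{eq:mom-confidence}, choose $t$ so that $\overline q_t=1/4$, i.e.\ $t=M_p^{1/p}\big(\frac{p\ln 4}{p-1}\frac1m\big)^{(p-1)/p}$, and use $1/m=K/n$. Since $\frac{2K}{K+1}<2<4$, this $t$ lies above the threshold. Then use $d(\alpha\Vert 1/4)\ge d(1/2\Vert 1/4)$, which holds because $\alpha\ge1/2>1/4$ and $d(\cdot\Vert 1/4)$ is increasing on $[1/4,1]$. Compute
\[
d(1/2\Vert1/4)=\tfrac12\ln 2+\tfrac12\ln\tfrac23=\tfrac12\ln\tfrac43=\ln\tfrac{2}{\sqrt3}.
\]
Hence the probability is at most $\exp(-K\ln(2/\sqrt3))\le\delta$ once $K\ge\ln(1/\delta)/\ln(2/\sqrt3)$. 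The tail bound is stated for $\ge t$, so the complement gives $\widehat\mu_{\mathrm{MoM}}-\mu<t$.

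The main point needing care is the reduction from the median event to the block-count event, together with the appeal to the relative-entropy Chernoff bound and its validity condition $\overline q_t<\alpha$. The remaining steps are direct substitutions into \Cref{col:gen_empirical}.
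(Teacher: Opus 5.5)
Your proposal is correct and follows essentially the same route as the paper: the block-count reduction, $\Pp(Y_j-\mu\ge t)\le\overline q_t$ from \Cref{col:gen_empirical}, a Chernoff bound in binary-KL form valid when $\overline q_t<\frac{K+1}{2K}$, and then $\overline q_t=1/4$ with $d\big(\frac{K+1}{2K}\Vert\frac14\big)\ge d\big(\frac12\Vert\frac14\big)=\ln(2/\sqrt3)$. The only difference is that the paper derives the Chernoff bound inline, using $\E\big[e^{\lambda\mathbf 1\{Y_j-\mu\ge t\}}\big]\le 1-\overline q_t+\overline q_t e^{\lambda}$ and an explicit choice of $\lambda$, which also takes care of your monotonicity/coupling step.
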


\begin{remark}\label{rmk:mom_comp}
\citet[Lemma~2]{bubeck2013bandits} consider deviation bounds for heavy-tailed distributions. They obtain the deviation
$V_p^{1/p}(K/n)^{1/p^*}$ under the centered moment condition
$V_p\coloneqq\E[|X-\E X|^p]<\infty$. The high-probability form \eqref{eq:mom-confidence} attains the same deviation scale,
$M_p^{1/p}(K/n)^{1/p^*}$, with the centered moment $V_p$ replaced by the
one-sided moment $M_p$. See \Cref{tab:comparison} for a comparison.
\end{remark}
The proof of \Cref{col:mom_rate} rests on the identity
\begin{equation}\label{eq:p_eq}
\mathbb P(\widehat\mu_{\mathrm{MoM}}-\mu\ge t)
=\mathbb P \left(\left\{\#\{j\in[K]: Y_j-\mu\ge t\}\ge \frac{K+1}{2}\right\}\right),
\end{equation}
  which holds since the median exceeds $\mu+t$ exactly when at least $(K+1)/2$ of the block means do. We combine this with the Chernoff bound and use $\mathbb P(Y_j-\mu\ge t)\le\overline q_t$ by \Cref{col:gen_empirical} to prove \Cref{col:mom_rate}. 
  
  Using the identity \eqref{eq:p_eq}, we can also prove a concentration bound that holds for every $t>0$. Instead of the Chernoff bound, we use a union bound to prove \Cref{rmk:mom}.
\begin{corollary}\label{rmk:mom} Consider the same setting as \Cref{col:mom_rate}. For every $t>0$,
\begin{equation*}
\mathbb P(\widehat\mu_{\mathrm{MoM}}-\mu\ge t)
\le\binom{K}{(K+1)/2}\,\overline q_t^{\,\frac{K+1}{2}}=\binom{K}{(K+1)/2}\exp\left(-\frac{(p-1)(K+1)}{2pKM_p^{1/(p-1)}}nt^{p/(p-1)}\right).
\end{equation*}
\end{corollary}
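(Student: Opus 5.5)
We will prove \Cref{rmk:mom} from the identity \eqref{eq:p_eq} together with the single-block tail bound of \Cref{col:gen_empirical}. Write $A_j\coloneqq\{Y_j-\mu\ge t\}$ for $j\in[K]$. The blocks consist of disjoint sets of i.i.d.\ observations, so the events $A_1,\dots,A_K$ are mutually independent. Each block mean $Y_j$ is an empirical average of $m$ i.i.d.\ copies of $X$ with $X\le b$ a.s.\ and $\E[(b-X)^p]=M_p>0$. Applying \Cref{col:gen_empirical} with $n$ replaced by $m$ and $\overline M_m=M_p$ gives $\Prob(A_j)\le\overline q_t$ for every $t>0$.

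Next we bound the right-hand side of \eqref{eq:p_eq} by a union bound over index sets. Let $r\coloneqq(K+1)/2$. If at least $r$ of the events $A_j$ occur, then some subset $S\subseteq[K]$ with $|S|=r$ satisfies $\bigcap_{j\in S}A_j$. Hence
\[
\Prob\Big(\#\{j:A_j\}\ge r\Big)\le\sum_{|S|=r}\Prob\Big(\bigcap_{j\in S}A_j\Big)=\sum_{|S|=r}\prod_{j\in S}\Prob(A_j)\le\binom{K}{r}\overline q_t^{\,r},
\]
where the equality uses independence across blocks. This is the first claimed bound.

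For the explicit form, we substitute the definition of $\overline q_t$ and use $m=n/K$:
\[
\overline q_t^{\,r}=\exp\!\left(-\frac{K+1}{2}\cdot\frac{p-1}{p}\cdot\frac{n\,t^{p/(p-1)}}{K\,M_p^{1/(p-1)}}\right),
\]
which matches the stated exponent.

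No step is a real obstacle. The only points needing care are that \eqref{eq:p_eq} holds exactly, including ties at the threshold: the median is the $r$-th order statistic, so it is at least $\mu+t$ if and only if at least $r$ block means are. The second is that \Cref{col:gen_empirical} is valid for all $t>0$, which is what removes the threshold on $t$ required in \Cref{col:mom_rate}.
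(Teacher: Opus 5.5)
Your proposal is correct and follows the paper's proof: the identity \eqref{eq:p_eq}, the per-block bound $\Prob(Y_j-\mu\ge t)\le\overline q_t$ from \Cref{col:gen_empirical} applied with $m$ observations, and a union bound over the $\binom{K}{(K+1)/2}$ index sets combined with independence of the blocks. The substitution $m=n/K$ gives the stated exponent, and your remark on ties in the median correctly justifies \eqref{eq:p_eq}.
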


\subsubsection{Uniform risk bounds for unbounded losses}
\label{sec:empirical-processes}

We next bound the population risk uniformly over a class of nonnegative,
possibly unbounded losses. Let $X=(X_1,\ldots,X_n)$ consist of i.i.d.\
observations taking values in $\mathcal{X}$, and let $\mathcal{H}$ be a
nonempty countable index set. For every $h\in\mathcal{H}$, let
$\ell_h:\mathcal{X}\to[0,\infty)$ be measurable, and define
\[
R(h):=\mathbb{E}[\ell_h(X_1)],
\qquad
\widehat R_n(h):=\frac1n\sum_{k=1}^n\ell_h(X_k).
\]
Suppose that, for some $p\in(1,2]$,
\begin{equation}
    \label{eq:ep-moment}
0<M_p:=\sup_{h\in\mathcal{H}}
\mathbb{E}[\ell_h(X_1)^p]<\infty.
\end{equation}
Writing $\mathcal{L}:=\{\ell_h:h\in\mathcal{H}\}$, define its expected
Rademacher complexity by
\[
\mathfrak{R}_n(\mathcal{L})
:=\mathbb{E}_{X,\varepsilon}\!\left[
\sup_{h\in\mathcal{H}}\frac1n\sum_{k=1}^n
\varepsilon_k\ell_h(X_k)\right],
\]
where $\varepsilon_1,\ldots,\varepsilon_n$ are independent Rademacher
random variables, independent of $X$.

\begin{corollary}[Uniform risk bound]\label{cor:empirical-processes}
Under the preceding assumptions, suppose that $\mathfrak{R}_n(\mathcal{L})<\infty$. For every $\delta\in(0,1)$, with probability at least $1-\delta$, for every $h\in\mathcal{H}$,
\begin{equation}\label{eq:ep-uniform-risk}
R(h)\le\widehat R_n(h)+2\mathfrak{R}_n(\mathcal{L})+M_p^{1/p}\left(\frac{p}{p-1}\frac{\ln(1/\delta)}{n}\right)^{(p-1)/p}.
\end{equation}
\end{corollary}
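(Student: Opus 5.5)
The plan is to apply \Cref{thm:heavy-tail} to the supremum of the centered empirical process
\[
f(\X)\coloneqq\sup_{h\in\mathcal H}\big(R(h)-\widehat R_n(h)\big),
\]
and then bound $\E[f(\X)]$ by symmetrization. Since $\ell_h\ge0$, we have $R(h)-\widehat R_n(h)\le R(h)\le M_p^{1/p}$ by Jensen. Hence $f$ is bounded above, and each summand $-\ell_h(X_k)/n$ is bounded above by $0$.

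\textbf{Step 1: $f$ is integrable.} Fix any $h_0\in\mathcal H$. Then $f\ge R(h_0)-\widehat R_n(h_0)$, which is integrable, and $f\le M_p^{1/p}$. Countability of $\mathcal H$ guarantees measurability.

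\textbf{Step 2: verify \Cref{ass:McDiarmid's}.} Fix $k$ and the other coordinates $x_{-k}$. Replacing $x_k$ by $z$ changes $R(h)-\widehat R_n(h)$ by $(\ell_h(x_k)-\ell_h(z))/n$. Since $\ell_h\ge 0$, we get
\[
R(h)-\widehat R_n(h)\big|_{x_k=z}\le R(h)-\frac1n\sum_{i\ne k}\ell_h(x_i).
\]
Taking the supremum over $h$ gives $\sup_k f(x)\le g_k(x_{-k})\coloneqq\sup_h\big(R(h)-\frac1n\sum_{i\ne k}\ell_h(x_i)\big)$. Now take $h^*$ (nearly) attaining this supremum. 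Then
\[
f(x_{-k},X_k)\ge R(h^*)-\frac1n\sum_{i\ne k}\ell_{h^*}(x_i)-\frac{\ell_{h^*}(X_k)}n,
\]
so $D_k(\X)(x)\le \ell_{h^*}(X_k)/n+\varepsilon$. The difficulty is that $h^*$ depends on $x_{-k}$ but not on $X_k$. Because $h^*$ is fixed once $x_{-k}$ is fixed, we still obtain
\[
\E^{(k)}[D_k^p]\le n^{-p}\sup_h\E[\ell_h(X_1)^p]=M_p/n^p
\]
after letting $\varepsilon\to0$ along a sequence; this uses the $p$-th moment bound uniformly over $h$, together with Minkowski in $L_p$ to absorb $\varepsilon$. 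Summing over $k$ gives $V\le nM_p/n^p=M_p/n^{p-1}$.

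\textbf{Step 3: apply the theorem and symmetrize.} With $V^{1/p}\le M_p^{1/p}n^{-(p-1)/p}$, \eqref{eq:confidence-heavy-tail} gives, with probability at least $1-\delta$,
\[
f\le\E f+M_p^{1/p}\left(\frac{p}{p-1}\frac{\ln(1/\delta)}n\right)^{(p-1)/p}.
\]
Standard symmetrization with a ghost sample (valid because $\mathcal H$ is countable and everything is integrable) yields $\E f\le 2\mathfrak R_n(\mathcal L)$; this is finite by assumption. Finally, $R(h)-\widehat R_n(h)\le f$ for every $h$, which gives \eqref{eq:ep-uniform-risk}.

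The main obstacle is Step 2: the essential supremum over $z$ and the near-maximizer $h^*$ must be handled carefully, since $h^*$ must be chosen measurably in $x_{-k}$ and independently of $X_k$. Countability of $\mathcal H$ allows choosing $h^*$ as the first index within $\varepsilon$ of the supremum, which resolves both issues.
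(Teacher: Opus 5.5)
Your proposal is correct and follows the paper's proof essentially step for step. The shared steps are: the choice $f=\sup_{h\in\mathcal H}(R(h)-\widehat R_n(h))$, integrable because $R(h_0)-\widehat R_n(h_0)\le f\le M_p^{1/p}$; the bound $\sup_k f\le g_k$ (the paper's $T_{n,k}$) from nonnegativity of the losses; the first-index, $\X_{-k}$-measurable near-maximizer with conditional Minkowski, giving $V\le n^{1-p}M_p$; and the ghost-sample symmetrization giving $\E f\le 2\mathfrak{R}_n(\mathcal{L})$.
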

We make the following comments. First, \Cref{cor:empirical-processes} applies to possibly infinite classes and uses the ordinary empirical mean, without truncation or robustification. Its dependence on the confidence level is through $(\ln(1/\delta))^{(p-1)/p}$, even when $1<p<2$ and the losses have infinite variance. Clearly, Condition~\eqref{eq:ep-moment} alone does not ensure that $\mathfrak{R}_n(\mathcal{L})$ is finite or tends to zero. When it tends to zero, the bound yields (one-sided) uniform convergence. Second, for $p=2$, the confidence term is of the order of $\sqrt{M_2\ln(1/\delta)/n}$. This has the same Rademacher-complexity form and confidence rate as the classical bounded-loss bounds of \citet{Bartlett2002}, with a uniform second moment bound replacing a uniform bound on the losses. In particular, if $B_\infty:=\sup_{h\in\mathcal{H}}\|\ell_h(X_1)\|_{L_\infty}<\infty$, then $M_2^{1/2}\le B_\infty$, so~\eqref{eq:ep-uniform-risk} recovers the bounded-loss rate up to numerical constants. Third, uniform bounds for unbounded losses under finite-moment assumptions were obtained by \citet[Section~4]{Cortes2019} and \citet[Section~5.1]{Cortes2021} using relative-deviation arguments. More directly, \citet[Theorem~2.1]{Marchina2019} establishes concentration under an integrated-tail bound of Pareto type, with tail index $\alpha\in(1,2)$; a finite $\alpha$-th moment is not required. Our corollary instead assumes finite $p$-th moments, $p\in(1,2]$, and gives an explicit bound, even for $p=2$. The two results are therefore complementary, rather than one subsuming the other. Finally, \citet[Theorem~5.4]{aminiangeneralization} obtains a uniform risk bound for finite hypothesis classes via a union bound, whereas our corollary applies to possibly infinite classes with finite Rademacher complexity.
\section{Conclusion and Future Work}
We establish a one-sided McDiarmid-type concentration inequality for functions of independent random variables (\Cref{thm:heavy-tail}). We assume that the function is almost surely bounded above and that its deviations from the coordinatewise essential suprema have uniformly bounded conditional $p$-th moments, for some $1<p\le2$. Under these conditions, we obtain exponential upper-tail concentration. For $1<p<2$, this guarantee holds even when the function has infinite variance. The inequality is particularly well-suited to settings where a possibly nonlinear estimator is bounded on one side by construction. Promising applications include convergence guarantees for stochastic optimization with median-of-means gradient estimators, stability-based generalization bounds with heavy-tailed losses, and regret analysis in heavy-tailed bandits. Beyond the main result, we believe that our proof technique, the \emph{reverse KL tensorization} (\Cref{thm:reverse-kl}), may prove useful for establishing other types of concentration inequalities for nonlinear functions.

\bibliographystyle{plainnat}
\bibliography{reference}

\appendix

\section{Deferred proofs}

Here is the proof of the main propositions and lemmas from the paper body.
\paragraph{Notations.}
 We use $x=(x_1,\ldots,x_n)$ and $y=(y_1,\ldots,y_n)$ in $\mathcal{X}^n$ to denote elements in $\mathcal{X}^n$. We use $x_{-k}$ to denote the vector obtained by removing the $k$-th coordinate from $x$. For simplicity, we write $f(x_{-k},z)$ to denote $f(x_1,\ldots,x_{k-1},z,x_{k+1},\ldots,x_n)$. 
 
 $\X$ is a random vector whose law is $P_{\X}=\bigotimes_{k=1}^n P_{X_k}$ as in the main body. Since the law of $\X$ is fixed throughout the appendix, we write $D_k$ for the function $D_k(\X): \mathcal{X}^n \to \mathbb{R}$. Thus $D_k(x)$ and $D_k(\Y)$ denote $D_k(\X)(x)$ and $D_k(\X)(\Y)$ for $x \in \mathcal{X}^n$ and any random vector $\Y$ on $\mathcal{X}^n$.

 For a measurable $u:\mathcal X^n\to\mathbb R$ that is $P_{\X}$-integrable or nonnegative, define
\[
  \E^{(k)}[u](x)
  \coloneqq
  \int_{\mathcal X}
  u(x_1,\ldots,x_{k-1},z,x_{k+1},\ldots,x_n)
  \,dP_{X_k}(z).
\]
 Since $\E^{(k)}[u](x)$ depends on $x$ only through $x_{-k}$, we also regard it as a function on $\mathcal X^{n-1}$ and write $\E^{(k)}[u](x_{-k})$. Because the coordinates of $\X$ are independent, Fubini's theorem gives
$\E[u(\X)\mid \X_{-k}] = \E^{(k)}[u](\X_{-k})$ almost surely, for each $k\in[n]$.

 \subsection{Proofs for the main result (\texorpdfstring{\Cref{sec:results}}{Section 3})}
\begin{proposition}\label{prop:bound}
\Cref{ass:McDiarmid's}  implies that $\exists b\in\mathbb{R}~\text{s.t.}~f(\X)\le b~\text{a.s.}$
\end{proposition}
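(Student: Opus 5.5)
The plan is to compare $f(\Y)$ with $f(\X)$ through a hybrid chain, where $\Y$ is an independent copy of $\X$. Each step of the chain changes one coordinate, and the conditional moment bound in \Cref{ass:McDiarmid's} controls each step after averaging over the copy that is being replaced. I aim for the explicit bound $b=\E[f(\X)]+nV^{1/p}$.

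\emph{Step 1: set up the chain.} Let $\Y=(Y_1,\ldots,Y_n)$ be an independent copy of $\X$. For $k=0,\ldots,n$ define the hybrids $\bm{Z}^{(k)}\coloneqq(Y_1,\ldots,Y_k,X_{k+1},\ldots,X_n)$. Then $\bm{Z}^{(0)}=\X$ and $\bm{Z}^{(n)}=\Y$, and each $\bm{Z}^{(k)}$ has law $P_{\X}$. The vectors $\bm{Z}^{(k)}$ and $\bm{Z}^{(k-1)}$ differ only in coordinate $k$, where $Y_k$ replaces $X_k$. Moreover, $Y_k$ is independent of $\bm{Z}^{(k-1)}$.

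\emph{Step 2: bound one step of the chain.} By definition of the essential supremum, for every fixed $x$ we have $f(x_{-k},z)\le \sup_k f(x)$ for $P_{X_k}$-a.e.\ $z$. Since $Y_k\sim P_{X_k}$ is independent of $\bm{Z}^{(k-1)}$, Fubini's theorem gives, almost surely,
\[
f(\bm{Z}^{(k)})-f(\bm{Z}^{(k-1)})\le \sup_k f(\bm{Z}^{(k-1)})-f(\bm{Z}^{(k-1)})=D_k(\bm{Z}^{(k-1)}).
\]
Summing over $k$ yields $f(\Y)\le f(\X)+\sum_{k=1}^n D_k(\bm{Z}^{(k-1)})$ almost surely.

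\emph{Step 3: average over $\X$ with $\Y$ fixed.} The right-hand side is not bounded pointwise, so we integrate it out. In $\bm{Z}^{(k-1)}$ the $k$-th coordinate is $X_k$, which is independent of the other coordinates of $\bm{Z}^{(k-1)}$. Hence integrating over $X_k$ gives $\E^{(k)}[D_k](\bm{Z}^{(k-1)})$. By Jensen's inequality ($p>1$), this is at most $\big(\E^{(k)}[D_k^p](\bm{Z}^{(k-1)})\big)^{1/p}$.

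\emph{Step 4: use \Cref{ass:McDiarmid's}.} The assumption gives $\E^{(k)}[D_k^p]\le\sum_j\E^{(j)}[D_j^p]\le V$ for $P_{\X}$-a.e.\ point. Since $\bm{Z}^{(k-1)}\sim P_{\X}$, each averaged term is at most $V^{1/p}$ almost surely. Taking the conditional expectation over $\X$ given $\Y$ in Step 2 then gives $f(\Y)\le\E[f(\X)]+nV^{1/p}$ almost surely. This is finite because $f$ is integrable.

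\emph{Main obstacle.} The delicate part is the measure-theoretic bookkeeping. The essential supremum is taken one coordinate at a time, so each one-step bound must be turned into an almost-sure statement on the joint space via Fubini. The conditional expectations in Step 3 must be taken coordinate by coordinate, which is legitimate because the terms are nonnegative and the coordinates are independent. The underlying inequality itself is elementary.
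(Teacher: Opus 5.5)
Your proof is correct, and its skeleton matches the paper's. Both use an independent copy $\Y$ and a hybrid chain that swaps one coordinate at a time. Both get the one-step bound by $D_k$ from the coordinatewise essential supremum, and both apply $\E^{(k)}[D_k^p]\le V$ at a point whose law is $P_{\X_{-k}}$. The paper writes the chain as $H^{(k)}(x,y)=(y_1,\ldots,y_k,x_{k+1},\ldots,x_n)$ and obtains $f(x)-f(y)\le\sum_k D_k(H^{(k)}(x,y))$; up to relabeling, this is your Step~2.

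The difference is the closing step:
\begin{itemize}
\item \emph{Your route.} You integrate the telescoped inequality in first moment over the copy being swapped in. Jensen takes you from $\E^{(k)}[D_k]$ to $(\E^{(k)}[D_k^p])^{1/p}\le V^{1/p}$. Integrability of $f$ then gives the explicit level $b=\E[f(\X)]+nV^{1/p}$.
\item \emph{The paper's route.} It takes positive parts and applies Minkowski in $L_p(P_{\Y})$ to get $\E_{\Y}[(f(x)-f(\Y))_+^p]\le n^pV$. It then restricts to a sublevel set $\{f(\Y)\le M\}$ of positive probability $\rho$, which gives the non-explicit level $b=M+n(V/\rho)^{1/p}$.
\end{itemize}

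What the paper's route buys is that it only needs $f(\X)$ to be finite almost surely, not integrable. Yours is shorter and yields an explicit, distribution-free form of the bound. Since $\E|f(\X)|<\infty$ is a standing assumption in \Cref{sec:results}, your use of integrability is legitimate here.
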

\begin{proofof}{\Cref{prop:bound}}
   Let $\Y=(Y_1,\ldots,Y_n)$ be an independent copy of $\X$. For $k=0,\ldots,n$, define $H^{(k)}\colon \mathcal X^n\times\mathcal X^n\to\mathcal X^n$ by
\[
H^{(k)}(x,y)
\coloneqq
(y_1,\ldots,y_k,x_{k+1},\ldots,x_n),
\qquad x,y\in\mathcal X^n .
\]
We note that $H^{(0)}(x,y)=x$ and $H^{(n)}(x,y)=y$.
 For simplicity, we write 
 \[s_k(x_{-k})\coloneqq
\operatorname*{ess\,sup}_{z\sim P_{X_k}} f(x_{-k},z).\] $s_k(x_{-k})$ is $\sup_k f(x)$ in the main body. We make the independence of $\sup_i f(x)$ and $x_k$ explicit. With this notation, we can rewrite $D_k(x)$ as $D_k(x)=s_k(x_{-k})-f(x)$.

For each $k$, the vectors $H^{(k-1)}(x,y)$ and $H^{(k)}(x,y)$
agree in every coordinate except the $k$-th coordinate. By writing $H^{(k)}_{-k}(x,y)$ to denote the elements of $H^{(k)}(x,y)$ except for the $k$-th element, $H^{(k)}_{-k}(x,y)=H^{(k-1)}_{-k}(x,y)$. Hence, for
almost every $(x,y)$, by the definition of $s_k$,
\begin{align*}
f(H^{(k-1)}(x,y))
&\leq s_k\!\left(H^{(k-1)}_{-k}(x,y)\right)=
s_k\!\left(H^{(k)}_{-k}(x,y)\right),\\
f(H^{(k)}(x,y))
&=
s_k\!\left(H^{(k)}_{-k}(x,y)\right)
-D_k(H^{(k)}(x,y)).
\end{align*}
By adding both sides,
\[
f(H^{(k-1)}(x,y))-f(H^{(k)}(x,y))
\leq D_k(H^{(k)}(x,y)).
\]
Summing over $k\in[n]$ gives
\[
f(x)-f(y)
\leq
\sum_{k=1}^n D_k(H^{(k)}(x,y)).
\]
Since $D_k\geq 0$ almost surely, taking the positive part yields, for almost every $(x,y)$,
\[
(f(x)-f(y))_+
\leq
\sum_{k=1}^n D_k(H^{(k)}(x,y)).
\]

We then define \[q_k(x_{-k})
\coloneqq
\mathbb E^{(k)}[D_k(\X)^p|\X_{-k}=x_{-k}].\] 
Each $q_j(x_{-j})$ is nonnegative. Also, by \Cref{ass:McDiarmid's}, 
$\sum_{j=1}^n q_j(\X_{-j})\leq V$ $P_{\X}\text{-a.s.}$. It follows that, for every $k$,
\[
q_k(\X_{-k})
\leq
\sum_{j=1}^n q_j(\X_{-j})
\leq V
\qquad P_{\X}\text{-a.s.}
\]
Thus, by Fubini's theorem, for $P_{\X}$-almost every
$x$,
\begin{align*}
\mathbb E_{\Y}\!\left[D_k(H^{(k)}(x,\Y))^p\right]&=\mathbb E_{Y_1,\ldots,Y_{k-1}}\E_{Y_k}
 \left[
 D_k(H^{(k)}(x,\Y))^p
 \right]\\
&=
\mathbb E_{Y_1,\ldots,Y_{k-1}}
 \left[
 q_k(Y_1,\ldots,Y_{k-1},x_{k+1},\ldots,x_n)
 \right]
\leq V.
\end{align*}
Minkowski's inequality and monotonicity of $L_p$ norm now yields
\[
\left\|(f(x)-f(\Y))_+\right\|_{L_p(P_{\Y})}
\leq
\sum_{k=1}^n
\left\|D_k(H^{(k)}(x,\Y))\right\|_{L_p(P_{\Y})}
\leq nV^{1/p}.
\]
Equivalently,
\[
\mathbb E_{\Y}\!\left[(f(x)-f(\Y))_+^p\right]
\leq n^pV.
\]
Since $f(\Y)$ is finite almost surely, there exists $M\in\mathbb R$
such that $\rho\coloneqq\mathbb P(f(\Y)\leq M)>0$.
On the event $\{f(\Y)\leq M\}$,
\[
(f(x)-f(\Y))_+\geq (f(x)-M)_+.
\]
It follows that, for $P_{\X}$-almost every $x$,
\begin{align*}
\rho(f(x)-M)_+^p
\leq
\mathbb E_{\Y}\!\left[(f(x)-f(\Y))_+^p\right]
\leq n^pV.
\end{align*}
Therefore,
\[
f(x)
\leq
M+n\left(\frac{V}{\rho}\right)^{1/p}
\]
for $P_{\X}$-almost every $x$. This proves that $f(\X)$ is essentially
bounded above.
\end{proofof}

\begin{proofof}{\Cref{prop:McDiarmid's}}
    Let $\Y=(Y_1,\ldots,Y_n)$ be an independent copy of $\X$. Define random vectors $H^{(k)}$, for $k=0,\dots,~n$, taking values in $\mathcal X^n$ as 
\[
H^{(k)}
\coloneqq H^{(k)}(\Y,\X) = 
(X_1,\ldots,X_k,Y_{k+1},\ldots,Y_n),
\qquad k=0,\ldots,n,
\]
so that $H^{(0)}=\Y$ and $H^{(n)}=\X$. Then, 
\[
f(\X)-f(\Y)
=
\sum_{k=1}^n\Delta_k,~~\text{where}~~
\Delta_k\coloneqq f(H^{(k)})-f(H^{(k-1)}).
\]

Fix $k$ and write the elements of $H^{(k)}$ except for the $k$-th element as 
\[
\U_k\coloneqq(X_1,\ldots,X_{k-1},Y_{k+1},\ldots,Y_n).
\]
Conditionally on $\U_k$, the variables $X_k$ and $Y_k$ are
independent and have the same distribution. Define a random variable
\[
s_k(\U_k)
\coloneqq
\operatorname*{ess\,sup}_{z\sim P_{X_k}} f(\U_k,z).
\]
With this notation, we can write $D_k(\U_k,X_k)$ as $D_k(\U_k,X_k)=s_k(\U_k)-f(\U_k,X_k)$.\\
Since both $f(\U_k,X_k)$ and $f(\U_k,Y_k)$ are at most $s_k(\U_k)$ almost surely, by triangle inequality,
\begin{align*}
|\Delta_k|
&=
|f(\U_k,X_k)-f(\U_k,Y_k)| \\
&\leq
\bigl(s_k(\U_k)-f(\U_k,X_k)\bigr)
+
\bigl(s_k(\U_k)-f(\U_k,Y_k)\bigr) \\
&=
D_k(\U_k,X_k)+D_k(\U_k,Y_k).
\end{align*}
Minkowski's inequality therefore gives
\begin{align*}
\left(
\mathbb E\!\left[|\Delta_k|^p\mid \U_k\right]
\right)^{1/p}
&\leq
\left(
\mathbb E\!\left[D_k(\U_k,X_k)^p\mid \U_k\right]
\right)^{1/p} +\left(
\mathbb E\!\left[D_k(\U_k,Y_k)^p\mid \U_k\right]
\right)^{1/p} \\
&=
2\left(
\E^{(k)}[D_k(\X)^p](\U_k)
\right)^{1/p}.
\end{align*}
Since $\U_k$ has the same distribution as $\X_{-k}$,
\begin{align*}
\left(
\mathbb E\!\left[|\Delta_k|^p\mid \U_k\right]
\right)^{1/p}=
\left(
\mathbb E\!\left[|\Delta_k|^p\mid \X_{-k}\right]
\right)^{1/p}
\leq
2\E^{(k)}[D_k(\X)^p\mid\X_{-k}]
^{1/p}
=2\E^{(k)}[D_k(\X)^p]
^{1/p}.
\end{align*}
By non-negativity, for every $k$,
\[
\E^{(k)}[D_k(\X)^p]
\leq
\sum_{j=1}^n \E^{(j)}[D_j(\X)^p]
\leq V
\qquad P_{\X}\text{-a.s.},
\]
Therefore, we obtain
\[
\|\Delta_k\|_{L_p(P_{\X}\otimes P_{\Y})}\leq 2V^{1/p}.
\]
Hence, by Minkowski's inequality,
\begin{equation}\label{eq:independent-copy-Lp}
\|f(\X)-f(\Y)\|_{L_p(P_{\X}\otimes P_{\Y})}
\leq
\sum_{k=1}^n\|\Delta_k\|_{L_p(P_{\X}\otimes P_{\Y})}
\leq
2nV^{1/p}.
\end{equation}

We now show that \eqref{eq:independent-copy-Lp} implies
$f(\X)\in L_p(P_{\X})$. Since $f(\Y)$ is finite almost surely, there exists
$M<\infty$ such that $\rho\coloneqq\mathbb P\bigl(|f(\Y)|\leq M\bigr)>0$.
On the event $\{|f(\Y)|\leq M\}$,
\[
|f(\X)-f(\Y)|
\geq
\bigl(|f(\X)|-M\bigr)_+.
\]
Using the independence of $\X$ and $\Y$, we obtain
\begin{align*}
\mathbb E|f(\X)-f(\Y)|^p
&\geq
\mathbb E\!\left[
\mathbf 1_{\{|f(\Y)|\leq M\}}
\bigl(|f(\X)|-M\bigr)_+^p
\right] \\
&=
\rho\,
\mathbb E\bigl(|f(\X)|-M\bigr)_+^p.
\end{align*}
The left-hand side is finite by
\eqref{eq:independent-copy-Lp}, so 
\[
\mathbb E\bigl(|f(\X)|-M\bigr)_+^p<\infty.
\]
Since $|f(\X)|\leq M+\bigl(|f(\X)|-M\bigr)_+$, it follows that 
\[
\mathbb E|f(\X)|^p<\infty.
\]
Thus $f(\X)\in L_p(P_{\X})$, and therefore for every $0<r\leq p$,
\[
f(\X)\in L_r(P_{\X}).
\]

Now we prove that $f(\X)$ can have an infinite variance. For $1<p<q$, let $X$ have the Pareto distribution with scale parameter one and shape parameter $q$, so
\[
  \mathbb P(X>t)=t^{-q},\qquad t\ge 1.
\]
Let  \(X_1,\ldots,X_n\) be i.i.d.\ copies of the Pareto random variable \(X\). We consider $f(\X)=-\frac{1}{n}\sum_{i=1}^n X_i$. Then, for any fixed \(x\in\mathcal X^n\), we have
\[
f(x_1,\ldots,x_{k-1},z,x_{k+1},\ldots,x_n)
=
-\frac{1}{n}
\left(
\sum_{i\ne k}x_i+z
\right).
\]
Since \(z\ge 1\) almost surely and the above expression is decreasing in \(z\),
\[
\begin{aligned}
\sup_k f(x)
&=
\operatorname*{ess\,sup}_{z\sim P_{X_k}}
f(x_1,\ldots,x_{k-1},z,x_{k+1},\ldots,x_n) \\
&=
-\frac{1}{n}
\left(
\sum_{i\ne k}x_i+1
\right).
\end{aligned}
\]
Consequently,
\[
\begin{aligned}
D_k(\mathbf X)
&=
\sup_k f(\mathbf X)-f(\mathbf X) \\
&=
-\frac{1}{n}
\left(
\sum_{i\ne k}X_i+1
\right)
+
\frac{1}{n}\sum_{i=1}^n X_i =
\frac{X_k-1}{n}.
\end{aligned}
\]
Therefore,
\[
D_k(\mathbf X)^p
=
\frac{(X_k-1)^p}{n^p},
\]
and hence
\[
\mathbb E^{(k)}
\left[
D_k(\mathbf X)^p
\right]
=
\frac{1}{n^p}
\mathbb E\left[(X-1)^p\right]\quad P_{\X}\text{-a.s.}
\]
Since \(p<q\), the Pareto random variable \(X\) has a finite \(p\)-th
moment, and therefore
\[
\mathbb E[(X-1)^p]<\infty.
\]
It follows that
\[
\begin{aligned}
V
=
\left\|
\sum_{k=1}^n
\mathbb E^{(k)}
\left[
D_k(\mathbf X)^p
\right]
\right\|_{L_{\infty}(P_{\mathbf X})} =
\left\|
\frac{n}{n^p}
\mathbb E[(X-1)^p]
\right\|_{L_{\infty}(P_{\mathbf X})} =
n^{1-p}\mathbb E[(X-1)^p]
<\infty.
\end{aligned}
\]
Thus \Cref{ass:McDiarmid's} is satisfied.

On the other hand,
\[
|f(\mathbf X)|
=
\frac{1}{n}\sum_{i=1}^n X_i
\ge
\frac{X_1}{n},
\]
and therefore
\[
|f(\mathbf X)|^q
\ge
\frac{X_1^q}{n^q}.
\]
Taking expectations gives
\[
\mathbb E\left[|f(\mathbf X)|^q\right]
\ge
\frac{1}{n^q}
\mathbb E[X_1^q]
=
\frac{1}{n^q}
\mathbb E[X^q]
.
\]
For a Pareto random variable with tail index \(q\),
$\mathbb E[X^r]<\infty\Longleftrightarrow r<q$, and hence $\mathbb E[X^q]=\infty$. Therefore,
\[
\mathbb E\left[|f(\mathbf X)|^q\right]=\infty.
\]
In particular, if $1<p<q\le 2$, 
\[
\mathbb E\left[f(\mathbf X)^2\right]=\infty.
\]
Hence, for \(1<p<q\le 2\), we obtain an example for which $V<\infty$ i.e. \Cref{ass:McDiarmid's} is satisfied, while simultaneously $f(\mathbf X)\notin L_q(P_{\X})$ and $f(\mathbf X)\notin L_2(P_{\X}).$
\end{proofof}
\begin{remark}

Additionally, conditional Jensen's inequality and
\eqref{eq:independent-copy-Lp} give
\begin{align*}
\|f(\X)-\mathbb Ef(\X)\|_{L_p(P_{\X})}
&=
\left\|
\mathbb E_{\Y}\!\left[f(\X)-f(\Y)\mid \X\right]
\right\|_{L_p(P_{\X})} \\
&\leq
\|f(\X)-f(\Y)\|_{L_p(P_{\X}\otimes P_{\Y})} \\
&\leq
2nV^{1/p}.
\end{align*}
\end{remark}

\subsection{Proofs for the reverse KL tensorization (\texorpdfstring{\Cref{thm:reverse-kl}}{Proposition 3.4} in \texorpdfstring{\Cref{sec:reverse_kl}}{Section 3.1})}\label{app:rKL_rigorous}
\begin{proofof}{\Cref{thm:reverse-kl}}
   For $S\subseteq[n]$, we write $X_S\coloneqq\{X_i:i\in S\}$.
   
   Define the projection $\Pi_S$ of integrable $\sigma(X_{[n]})$-measurable random variables onto $\sigma(X_S)$-measurable ones, and a $\sigma(X_S)$-measurable random variable $g_S$, by
\[
  \Pi_S\coloneqq\prod_{j\notin S}\mathbb E^{(j)},
  \qquad
  g_S\coloneqq\Pi_S\!\left(\frac{e^{h(\X)}}{Z_h}\right)=\mathbb E\!\left[\tfrac{e^{h(\X)}}{Z_h}\,\Big|\,X_S\right],
  \qquad
  \text{where}\quad Z_h\coloneqq\E[e^{h(\X)}]
  .
\]

Using the tower property, 
\begin{equation}\label{later_density}
  \mathbb E[g_S]
  =\mathbb E\!\left[\mathbb E\!\left[\tfrac{e^{h(\X)}}{Z_h}\,\Big|\,X_S\right]\right]
  =\mathbb E\!\left[\tfrac{e^{h(\X)}}{Z_h}\right]=1.
\end{equation}
Also, the random variable $g_S$ is nonnegative. Thus, $g_S$ is a probability density with respect to $P_{\X}$. Since $P_{\X}$ is a product measure,
\[
  \mathbb E^{(i)}\mathbb E^{(j)}=\mathbb E^{(j)}\mathbb E^{(i)}.
\]
Moreover,
\[
  \mathbb E^{(k)}\mathbb E^{(k)}
  =
  \mathbb E^{(k)},
\]
because for any measurable and integrable $u$, $\mathbb E^{(k)}[u]$ does not depend on $x_k$.  Therefore, the
operators $\mathbb E^{(1)},\ldots,\mathbb E^{(n)}$ are commuting
projections. Consequently, for $S\subseteq[n]$, the operator $\Pi_S=\prod_{j\notin S}\mathbb E^{(j)}$ is independent of the order of the factors.  
This leads to
\[
  \Pi_{[n]}=I,
  \qquad
  \Pi_\varnothing u=\mathbb E[u].
\]
In particular,
\begin{equation}\label{later_ent}
    g_{[n]}=\frac{e^{h(\X)}}{Z_h},
  \qquad
  g_\varnothing=1.
\end{equation}
The projections also satisfy
\[
  \Pi_S\Pi_T=\Pi_{S\cap T},
\]
because applying both operators integrates all the coordinates in $S^c\cup T^c=(S\cap T)^c$. If $k\notin S$, then the commutation of the coordinate projections gives
\begin{align}\label{later_eq}
  \mathbb E^{(k)}[g_{S\cup\{k\}}]=
  \mathbb E^{(k)}
  \Pi_{S\cup\{k\}}
  \left(\frac{e^{h(\X)}}{Z_h}\right)
  =
  \prod_{j\notin S}\mathbb E^{(j)}
  \left(\frac{e^{h(\X)}}{Z_h}\right)
  =
  \Pi_S\left(\frac{e^{h(\X)}}{Z_h}\right)
  =g_S.
\end{align}

We now record some identities for KL divergences. Write $\rho\coloneqq e^{h(\X)}/Z_h$, so $P_h=\rho\,P_{\X}$. We have 
\[
\mathbb E[\rho]=1,\quad\ln\rho=h(\X)-\ln Z_h,\quad\rho>0~\text{a.s.}
\]
By definition, $\KL(P_h\Vert P_{\X})=\mathbb E_{P_h}[\ln\frac{dP_h}{dP_{\X}}]=\mathbb E[\rho\ln\rho]$. 
Substituting $\ln\rho=h(\X)-\ln Z_h$,
\[
\KL(P_h\Vert P_{\X})=\mathbb E[\rho\,h(\X)]-\ln Z_h\,\mathbb E[\rho]=\mathbb E[\rho\,h(\X)]-\ln Z_h.
\]
By definition, $\KL(P_{\X}\Vert P_h)=\mathbb E_{P_{\X}}[\ln\frac{dP_{\X}}{dP_h}]=\E[\ln(1/\rho)]$. Again, by $\ln\rho=h(\X)-\ln Z_h$,
\[
\KL(P_{\X}\Vert P_h)=\mathbb E[\ln Z_h-h(\X)]=\ln Z_h-\mathbb E[h(\X)].
\]
Using these identities,
\begin{align} \notag
  \mathbb E
  \left[
    \left(\frac{e^{h(\X)}}{Z_h}-1\right)h(\X)
  \right]
  -\KL(P_h\Vert P_{\X})
  &=
  \E[\rho h(\X)-h(\X)]-(\mathbb E[\rho\,h(\X)]-\ln Z_h)\\ \notag
  &=
  \ln Z_h-\mathbb E[h(\X)]\\ \label{rough_key}
  &=
  \KL(P_{\X}\Vert P_h).
\end{align}

Our goal is to bound the reverse KL in \eqref{rough_key}. Our strategy is to first bound $\mathbb E
  \left[
    \left(\frac{e^{h(\X)}}{Z_h}-1\right)h(\X)
  \right]$. Then, we subtract the forward KL term $\KL(P_h\Vert P_{\X})$ from both sides to obtain a bound on the reverse KL $\KL(P_{\X}\Vert P_h)$.

The first step is to decompose $\mathbb E
  \left[
    \left(\frac{e^{h(\X)}}{Z_h}-1\right)h(\X)
  \right]$.
Fix a permutation $\pi$ of $\{1,\dots,n\}$. We define $T_i\coloneqq\{\pi(1),\dots,\pi(i)\}$, with $T_0\coloneqq\varnothing$. By conditioning successively on the independent random variables and using a telescoping sum,
\begin{equation*}
    \frac{e^{h(\X)}}{Z_h}-1
    =\E\left[\frac{e^{h(\X)}}{Z_h} \;\middle|\;X_{[n]}\right]-\E\left[\frac{e^{h(\X)}}{Z_h} \;\middle|\;X_{\varnothing}\right]
    =\sum_{i=1}^{n}\E\left[\frac{e^{h(\X)}}{Z_h} \;\middle|\;X_{T_i}\right]-\E\left[\frac{e^{h(\X)}}{Z_h} \;\middle|\;X_{T_{i-1}}\right].
\end{equation*}
Denote the set of indices preceding $k$ in $\pi$ by $S_k(\pi)$, so taking $k=\pi(i)$ gives $S_{\pi(i)}(\pi)=T_{i-1}$. Using this notation,
\begin{equation*}
    \frac{e^{h(\X)}}{Z_h}-1
    =\sum_{k=1}^{n}\E\left[\frac{e^{h(\X)}}{Z_h} \;\middle|\;X_{S_k(\pi)\cup\{k\}}\right]-\E\left[\frac{e^{h(\X)}}{Z_h} \;\middle|\;X_{S_k(\pi)}\right].
\end{equation*}
Now let $\pi$ be uniform on the symmetric group $\mathfrak{S}_n\coloneqq \{\text{all permutations of $\{1,\ldots,n\}$}\}$. Averaging over $\pi$ gives
\begin{equation}\label{arc_summand}
    \frac{e^{h(\X)}}{Z_h}-1
    =\sum_{k=1}^n\sum_{S\subseteq[n]\setminus\{k\}}\mathbb P \bigl(S_k(\pi)=S\bigr)
    \left(\E\left[\frac{e^{h(\X)}}{Z_h} \;\middle|\;X_{S\cup\{k\}}\right]-\E\left[\frac{e^{h(\X)}}{Z_h} \;\middle|\;X_{S}\right]\right).
\end{equation}
For fixed $k\in[n]$ and fixed $S\subseteq[n]\setminus\{k\}$,
\begin{align*}
    \mathbb P&\bigl(S_k(\pi)=S\bigr)\\
  &=\frac{\#\text{ all the permutations of }\{1,\dots,n\} \text{ such that the $(|S|+1)$-th element is $k$ and $S_k(\pi)=S$}}{\#\text{ all the permutations of }\{1,\dots,n\}}\\
  &=\frac{|S|!\,(n-1-|S|)!}{n!}
\end{align*}
where the probability is over the random permutation $\pi$. As a check, for fixed $k$,
\[
\sum_{S\subseteq[n]\setminus\{k\}}\mathbb P\bigl(S_k(\pi)=S\bigr)
=
\sum_{l=0}^{n-1}\sum_{S\subseteq[n]\setminus\{k\},~|S|=l}\frac{1}{n}\binom{n-1}{l}^{-1}=1.
\]
For $k\in[n]$ and $S\subseteq[n]\setminus\{k\}$, define
\[
  w_{k,S}\coloneqq\frac{|S|!(n-1-|S|)!}{n!}=\mathbb P\bigl(S_k(\pi)=S\bigr).
\]
To express the summands in \eqref{arc_summand}, we define the random variables
\begin{align*}
  \ell_k
  &\coloneqq
  \sum_{S\subseteq[n]\setminus\{k\}}  \mathbb P\bigl(S_k(\pi)=S\bigr)\mathbb E\!\left[\tfrac{e^{h(\X)}}{Z_h}\,\Big|\,X_S\right]
  =\sum_{S\subseteq[n]\setminus\{k\}}w_{k,S}g_S,\\
  b_k
  &\coloneqq
  \sum_{S\subseteq[n]\setminus\{k\}}  \mathbb P\bigl(S_k(\pi)=S\bigr)\mathbb E\!\left[\tfrac{e^{h(\X)}}{Z_h}\,\Big|\,X_{S\cup\{k\}}\right]
  =\sum_{S\subseteq[n]\setminus\{k\}}w_{k,S}g_{S\cup\{k\}},\\
  a_k&\coloneqq b_k-\ell_k.
\end{align*}
Using this notation, from \eqref{arc_summand}, we obtain
\begin{align} \label{eq:ak-sum}
  \sum_{k=1}^n a_k=\frac{e^{h(\X)}}{Z_h}-1.
\end{align}
Thus,
\begin{align*}
  \mathbb E
  \left[
    \left(\frac{e^{h(\X)}}{Z_h}-1\right)h(\X)
  \right]
  =\sum_{k=1}^n\mathbb E[(b_k-\ell_k)h(\X)].
\end{align*}

The next step is to apply the conditional Donsker--Varadhan inequality to bound $\mathbb E[a_kh(\X)]=\mathbb E[(b_k-\ell_k)h(\X)]$. Fix $x_{-k}$. We apply the Donsker--Varadhan variational formula (\Cref{lem:DV} in Appendix) to the $k$-th
coordinate, with reference measure $P_{X_k}$, test function $z\mapsto h(x_{-k},z)$, and density $\rho\coloneqq b_k/\ell_k$. To confirm that $\rho$ is admissible, we can verify
\begin{equation*}
    \mathbb E^{(k)}
  \left[\frac{b_k}{\ell_k}\right]
  =
  \frac{\mathbb E^{(k)}[b_k]}{\ell_k}
  =\frac{\ell_k}{\ell_k}
  =1
\end{equation*}
since $\ell_k$ is $\sigma(\X_{-k})$-measurable and \eqref{later_eq} implies $\mathbb E^{(k)}[b_k]=\ell_k$.
Applying the Donsker--Varadhan variational formula (\Cref{lem:DV} in Appendix) with these choices gives, $P_{\X_{-k}}$-a.s.\ ,
\begin{equation*}\label{rough_varadhan}
    \mathbb E^{(k)}\left[\frac{b_k}{\ell_k}h(\X)\right]
  -\mathbb E^{(k)}\left[\frac{b_k}{\ell_k}\ln\frac{b_k}{\ell_k}\right]
  \le\ln\mathbb E^{(k)}\big[e^{h(\X)}\big].
\end{equation*}
Subtracting $\mathbb E^{(k)}[h(\X)]$ from both sides gives, $P_{\X_{-k}}$-a.s.\ ,
\begin{align}
  \mathbb E^{(k)}
  \left[
    \left(\frac{b_k}{\ell_k}-1\right)h(\X)
  \right]
  &\le
  \mathbb E^{(k)}
  \left[
    \frac{b_k}{\ell_k}
    \ln\frac{b_k}{\ell_k}
  \right]
  +\ln\mathbb E^{(k)}[e^{h(\X)}]
  -\mathbb E^{(k)}[h(\X)].
  \label{eq:gibbs-fibre}
\end{align}
Multiplying \eqref{eq:gibbs-fibre} by $\ell_k$ gives, $P_{\X_{-k}}$-a.s.\ ,
\begin{align*}
  \ell_k
  \mathbb E^{(k)}
  \left[
    \left(\frac{b_k}{\ell_k}-1\right)h(\X)
  \right]\le
  \ell_k
  \mathbb E^{(k)}
  \left[
    \frac{b_k}{\ell_k}
    \ln\frac{b_k}{\ell_k}
  \right]
  +
  \ell_k
  \left(
    \ln\mathbb E^{(k)}[e^{h(\X)}]
    -\mathbb E^{(k)}[h(\X)]
  \right).
\end{align*}
Since $\ell_k$ is $\sigma(\X_{-k})$-measurable, it can be moved inside
$\mathbb E^{(k)}$.  Hence, $P_{\X_{-k}}$-a.s.\ ,
\begin{align*}
  \ell_k
  \mathbb E^{(k)}
  \left[
    \left(\frac{b_k}{\ell_k}-1\right)h(\X)
  \right]
  &=
  \mathbb E^{(k)}
  \left[
    \ell_k
    \left(\frac{b_k}{\ell_k}-1\right)h(\X)
  \right]\\
  &=
  \mathbb E^{(k)}
  \left[
    (b_k-\ell_k)h(\X)
  \right]\\
  &=
  \mathbb E^{(k)}[a_kh(\X)].
\end{align*}
Similarly, $P_{\X_{-k}}$-a.s.\ ,
\begin{align*}
  \ell_k
  \mathbb E^{(k)}
  \left[
    \frac{b_k}{\ell_k}
    \ln\frac{b_k}{\ell_k}
  \right]
  =
  \mathbb E^{(k)}
  \left[
    \ell_k\frac{b_k}{\ell_k}
    \ln\frac{b_k}{\ell_k}
  \right]=
  \mathbb E^{(k)}
  \left[
    b_k\ln\frac{b_k}{\ell_k}
  \right].
\end{align*}
Consequently, $P_{\X_{-k}}$-a.s.\ ,
\begin{align*}
  \mathbb E^{(k)}[a_kh(\X)]
  \le
  \mathbb E^{(k)}
  \left[
    b_k\ln\frac{b_k}{\ell_k}
  \right]+
  \ell_k
  \left(
    \ln\mathbb E^{(k)}[e^{h(\X)}]
    -\mathbb E^{(k)}[h(\X)]
  \right).
\end{align*}
Taking expectation with respect to the remaining variables $\X_{-k}$,
or equivalently taking the full expectation with respect to $P_{\X}$, gives
\begin{align*}
  \mathbb E[a_kh(\X)]
  &\le
  \mathbb E
  \left[
    b_k\ln\frac{b_k}{\ell_k}
  \right]+
  \mathbb E
  \left[
    \ell_k
    \left(
      \ln\mathbb E^{(k)}[e^{h(\X)}]
      -\mathbb E^{(k)}[h(\X)]
    \right)
  \right].
\end{align*}
By summing over $k$,
\begin{align}
 \mathbb E
  \left[
    \left(\frac{e^{h(\X)}}{Z_h}-1\right)h(\X)
  \right]
  \le
  \sum_{k=1}^n\KL(b_kP_{\X}\Vert\ell_kP_{\X})+
  \sum_{k=1}^n\mathbb E
  \left[
    \ell_k
    \left(
      \ln\mathbb E^{(k)}[e^{h(\X)}]
      -\mathbb E^{(k)}[h(\X)]
    \right)
  \right].\label{arc_dv}
\end{align}

We then bound the first term of \eqref{arc_dv}. Using the property \eqref{later_density} and the non-negativity of $g_S$, we write $g_SP_{\X}$ for the probability measure on $\mathcal X^n$ given by $(g_SP_{\X})(A)\coloneqq\int_A g_S\,dP_{\X}$, so that $\frac{d(g_SP_{\X})}{dP_{\X}}=g_S$. Since $e^{h(\X)}/Z_h>0$, each $g_S$ is strictly positive almost surely. Thus, each measure $g_SP_{\X}$ satisfies $P_{\X}\ll g_SP_{\X}$. This also defines $\ell_kP_{\X}$ and $b_kP_{\X}$ accordingly. We write $\ell_k P_{\X}$ for the probability measure on $\mathcal X^n$ given by $(\ell_k P_{\X})(A)\coloneqq\int_A \ell_k\,dP_{\X}$, so that $\frac{d(\ell_k P_{\X})}{dP_{\X}}=\ell_k$, and similarly for $b_k P_{\X}$. Using this notation, each summand in the first term of \eqref{arc_dv} is
\[
  \mathbb E
  \left[
    b_k\ln\frac{b_k}{\ell_k}
  \right]
  =
  \KL(b_kP_{\X}\Vert\ell_kP_{\X}).
\]
We bound this quantity by the joint convexity of the KL divergence,
\begin{equation}\label{eq:Ck}
  \KL(b_kP_{\X}\Vert\ell_kP_{\X})
  \le
  \sum_{S\subseteq[n]\setminus\{k\}}w_{k,S}
  \KL\!\left(g_{S\cup\{k\}}P_{\X}\Vert g_SP_{\X}\right).
\end{equation}
For $S\subseteq[n]\setminus\{k\}$, by applying the Pythagorean identity for conditional projections (\Cref{lem:pythagoras} in Appendix) with $\mathcal G=\sigma(X_S)$ and
$f=g_{S\cup\{k\}}$,
\begin{align}
  \KL\!\left(g_{S\cup\{k\}}P_{\X}\Vert g_SP_{\X}\right)
  &={\rm Ent}_{P_{\X}}(g_{S\cup\{k\}})
    -{\rm Ent}_{P_{\X}}(g_S),
  \label{eq:entropy-difference}
\end{align}
where the entropy ($\rm Ent$) is defined in \Cref{lem:pythagoras}.
Note that, we can apply \Cref{lem:pythagoras} because, by the tower property,
\[
  \mathbb E\!\left[g_{S\cup\{k\}}\mid X_S\right]
  =\mathbb E\!\left[\,\mathbb E\!\left[\tfrac{e^{h(\X)}}{Z_h}\,\Big|\,X_{S\cup\{k\}}\right]\Big|\,X_S\right]
  =\mathbb E\!\left[\tfrac{e^{h(\X)}}{Z_h}\,\Big|\,X_S\right]=g_S.
\]
(This is the same identity $\mathbb E^{(k)}[g_{S\cup\{k\}}]=g_S$ established earlier.) Since $h\le0$ a.s., $e^{h(\X)}/Z_h\le Z_h^{-1}$ a.s. Conditional expectation preserves the bound, so $0<g_S\le Z_h^{-1}$ a.s. In addition, the function $x\mapsto x|\ln x|$ is bounded on $(0,Z_h^{-1}]$. Therefore, $\mathbb E[|g_S\ln g_S|]<\infty$. If moreover $S\subseteq T$, then $\ln g_S$ is $\sigma(X_S)$-measurable and $g_S=\mathbb E[g_T\mid X_S]$, so $\mathbb E[g_T|\ln g_S|]=\mathbb E[g_S|\ln g_S|]<\infty$. Consequently, for $S\subseteq T$,
\[
  \KL(g_TP_{\X}\Vert g_SP_{\X})
  =\mathbb E[g_T\ln g_T]-\mathbb E[g_T\ln g_S]<\infty .
\]
Summing \eqref{eq:Ck} over $k$ and substituting \eqref{eq:entropy-difference},
\begin{align*}
    \sum_{k=1}^n\KL(b_kP_{\X}\Vert\ell_kP_{\X})
  &\le\sum_{k=1}^n\sum_{S\subseteq[n]\setminus\{k\}}w_{k,S}
     \Bigl[{\rm Ent}_{P_{\X}}(g_{S\cup\{k\}})-{\rm Ent}_{P_{\X}}(g_S)\Bigr]\\
     &=\mathbb E_\pi\!\left[\sum_{k=1}^n
    \Bigl({\rm Ent}_{P_{\X}}\bigl(g_{S_k(\pi)\cup\{k\}}\bigr)
         -{\rm Ent}_{P_{\X}}\bigl(g_{S_k(\pi)}\bigr)\Bigr)\right]\\
        &=\mathbb E_\pi\!\left[{\rm Ent}_{P_{\X}}(g_{[n]})-{\rm Ent}_{P_{\X}}(g_\varnothing)\right]\\
        &={\rm Ent}_{P_{\X}}(g_{[n]})-{\rm Ent}_{P_{\X}}(g_\varnothing).
\end{align*}
By \eqref{later_ent}, we have
\begin{align*}
    &{\rm Ent}_{P_{\X}}(g_{[n]})
    =\E\left[\frac{e^{h(\X)}}{Z_h}\ln\left(\frac{e^{h(\X)}}{Z_h}\right)\right]-\E\left[\frac{e^{h(\X)}}{Z_h}\right]\ln\left(\E\left[\frac{e^{h(\X)}}{Z_h}\right]\right)=\E\left[\frac{e^{h(\X)}}{Z_h}\ln\left(\frac{e^{h(\X)}}{Z_h}\right)\right]\\
    &\qquad\quad\quad\quad=\KL(P_h\Vert P_{\X})\\
    &{\rm Ent}_{P_{\X}}(g_\varnothing)=\E\left[\ln(1)\right]-\ln(1)=0.
\end{align*}
Therefore,
\begin{align}\label{eq:Ck-sum}
  \sum_{k=1}^n\KL(b_kP_{\X}\Vert\ell_kP_{\X})
  \le \KL(P_h\Vert P_{\X}).
\end{align}

We now bound the second term of $\eqref{arc_dv}$.
For $T\subseteq[n]$, define
\[
  c_T
  \coloneqq\frac{|T|!(n-|T|)!}{(n+1)!},
  \qquad
  \ell\coloneqq\sum_{T\subseteq[n]}c_Tg_T.
\]
For $S\subseteq[n]\setminus\{k\}$,
\begin{equation}\label{later_third}
    c_S+c_{S\cup\{k\}}=\frac{|S|!(n-|S|)!}{(n+1)!}+\frac{(|S|+1)!(n-|S|-1)!}{(n+1)!}
  =\frac{|S|!(n-1-|S|)!}{n!}
  =w_{k,S}.
\end{equation}
Hence, using \eqref{later_eq} and \eqref{later_third},
\begin{equation}\label{later_second}
  \mathbb E^{(k)}[\ell]=\sum_{S\subseteq[n]\setminus\{k\}}c_S\mathbb E^{(k)}g_S+c_{S\cup\{k\}}\mathbb E^{(k)}g_{S\cup\{k\}}=\sum_{S\subseteq[n]\setminus\{k\}}
  (c_S+c_{S\cup\{k\}})g_S=\sum_{S\subseteq[n]\setminus\{k\}}w_{k,S}g_S=\ell_k.
\end{equation}
Using this relationship,
\begin{align}\notag
  &\mathbb E
  \left[
    \ell_k
    \left(
      \ln\mathbb E^{(k)}[e^{h(\X)}]
      -\mathbb E^{(k)}[h(\X)]
    \right)
  \right]\\ \notag
  &=
  \mathbb E
  \left[
    \mathbb E^{(k)}[\ell]
    \left(
      \ln\mathbb E^{(k)}[e^{h(\X)}]
      -\mathbb E^{(k)}[h(\X)]
    \right)
  \right]\\ \notag
  &=
  \mathbb E
  \left[
    \mathbb E^{(k)}
    \left[
      \ell
      \left(
        \ln\mathbb E^{(k)}[e^{h(\X)}]
        -\mathbb E^{(k)}[h(\X)]
      \right)
    \right]
  \right]\\ \label{rough_lk}
  &=
  \mathbb E
  \left[
    \ell
    \left(
      \ln\mathbb E^{(k)}[e^{h(\X)}]
      -\mathbb E^{(k)}[h(\X)]
    \right)
  \right].
\end{align}
The second equality holds because the quantity $\ln\mathbb E^{(k)}[e^{h(\X)}]-\mathbb E^{(k)}[h(\X)]$ depends on $\X$ only through $\X_{-k}$. Since the sum has finitely many terms,
\begin{align*}
  &\sum_{k=1}^n
  \mathbb E
  \left[
    \ell
    \left(
      \ln\mathbb E^{(k)}[e^{h(\X)}]
      -\mathbb E^{(k)}[h(\X)]
    \right)
  \right]=
  \mathbb E
  \left[
    \ell
    \sum_{k=1}^n
    \left(
      \ln\mathbb E^{(k)}[e^{h(\X)}]
      -\mathbb E^{(k)}[h(\X)]
    \right)
  \right].
\end{align*}
Since $\E[\ell]=\E[\E^{(k)}[\ell]]=\E[\ell_k]=1$,
\begin{align} \notag
  \mathbb E
  \left[
    \ell
    \sum_{k=1}^n
    \left(
      \ln\mathbb E^{(k)}[e^{h(\X)}]
      -\mathbb E^{(k)}[h(\X)]
    \right)
  \right]&\le
  \mathbb E[\ell]
  \left\|
    \sum_{k=1}^n
    \left(
      \ln\mathbb E^{(k)}[e^{h(\X)}]
      -\mathbb E^{(k)}[h(\X)]
    \right)
  \right\|_{L_{\infty}(P_{\X})}\\ \label{rough_final}
  &=
  \left\|
    \sum_{k=1}^n
    \left(
      \ln\mathbb E^{(k)}[e^{h(\X)}]
      -\mathbb E^{(k)}[h(\X)]
    \right)
  \right\|_{L_{\infty}(P_{\X})}.
\end{align}
We have thus proved, by \eqref{arc_dv}, \eqref{eq:Ck-sum}, and \eqref{rough_final},
\begin{align}\label{rough_result}
  \mathbb E
  \left[
    \left(\frac{e^{h(\X)}}{Z_h}-1\right)h(\X)
  \right]
  &\le
  \KL(P_h\Vert P_{\X})+
  \left\|
    \sum_{k=1}^n
    \left(
      \ln\mathbb E^{(k)}[e^{h(\X)}]
      -\mathbb E^{(k)}[h(\X)]
    \right)
  \right\|_{L_{\infty}(P_{\X})}.
\end{align}
Using \eqref{rough_key},
\[
  \KL(P_{\X}\Vert P_h)
  \le
  \left\|
    \sum_{k=1}^n
    \left(
      \ln\mathbb E^{(k)}[e^{h(\X)}]
      -\mathbb E^{(k)}[h(\X)]
    \right)
  \right\|_{L_{\infty}(P_{\X})}.
\]
With 
\[
\KL\!\left(P_{X_k}\Vert P_{h,k}^{x_{-k}}\right)
  =\ln\mathbb E^{(k)}[e^{h(\X)}](x_{-k})
    -\mathbb E^{(k)}[h(\X)](x_{-k}),
\]
this proves the claim. The equivalent form follows since 
\begin{align*}
  \KL(P_{\X}\Vert P_h)
  &=\ln Z_h-\mathbb E[h(\X)].
\end{align*}

\end{proofof}

\subsection{Proofs for the application to the empirical average (\texorpdfstring{\Cref{sec:empirical_mean}}{Section 3.2.1})}
\begin{proofof}{\Cref{col:gen_empirical}}
For $f(\X)=\overline X_n$, we have $D_k(\X)(\X)\le(b_k-X_k)/n$, so independence gives
\[
V \le \frac1{n^p}\sum_{k=1}^n M_{k,p} = n^{1-p}\overline M_n.
\]
Applying \Cref{thm:heavy-tail} yields \eqref{eq:mean-non-iid}, and inverting the tail bound gives \eqref{eq:non-iid-empirical-confidence}.
\end{proofof}

\subsection{Proofs for the application to MoM (\texorpdfstring{\Cref{sec:mom}}{Section 3.2.2})}
\begin{proofof}{\Cref{prop:non-iid-direct-mom} and \Cref{rmk:mom_same_non_iid}}
Define the block-level upper bounds and the function $f$ as follows: $b_j\coloneqq\frac{1}{m_j}\sum_{i=1}^{m_j}b_{ji},$ $B\coloneqq\med(b_1,\ldots,b_K),$ $f\coloneqq\widehat\mu_{\mathrm{MoM}}-B$. By assumption, $Y_j\le b_j$ for every $j$. This implies $f\le0$ almost surely. The finite moment assumptions on the samples
also imply that $\widehat\mu_{\mathrm{MoM}}$ is integrable. We denote the essential supremum with respect to $X_{ji}$, with all other observations held fixed, by $\sup_{ji}f$. Increasing $X_{ji}$ by $u\ge0$ increases $Y_j$ by $u/m_j$. Since the median is $1$-Lipschitz in each
of its arguments, 
\[
0\le D_{ji}\coloneqq\sup_{ji}f-f \le\frac{b_{ji}-X_{ji}}{m_j} \quad \text{(almost surely)}.
\]
We write $\E^{(ji)}$ for the expectation over $X_{ji}$ with the other observations fixed. The independence of the samples gives the bound
\[
\sum_{j=1}^K\sum_{i=1}^{m_j}\E^{(ji)}[D_{ji}^{\,p}] \le \sum_{j=1}^K\frac{1}{m_j^p}\sum_{i=1}^{m_j}M_{ji,p} =V_{\mathrm{mom}} \quad \text{(almost surely)}.
\]
Applying \Cref{thm:heavy-tail} to $f=\widehat\mu_{\mathrm{MoM}}-B$ yields
\eqref{eq:non-iid-direct-mom} because $f-\E f=\widehat\mu_{\mathrm{MoM}}-\E \widehat\mu_{\mathrm{MoM}}$.

  If all blocks have the same size $m_j=m$, then $V_{\mathrm{mom}}=m^{-p}\sum_{j=1}^K\sum_{i=1}^m M_{ji,p}=\overline M_n K^p n^{1-p}$, where $\overline M_n\coloneqq\frac 1n \sum_{j=1}^K\sum_{i=1}^mM_{ji,p}$. Substituting this into \eqref{eq:non-iid-direct-mom} gives \eqref{eq:equal-size-mom}.
\end{proofof}

\begin{remark}
    The tail bound \eqref{eq:non-iid-direct-mom} has the same form as \eqref{eq:tail-p} with $V$ replaced by $V_{\mathrm{mom}}$. The high-probability version follows by replacing $V$ by $V_{\mathrm{mom}}$ in \eqref{eq:confidence-heavy-tail}.
    
    Consider the same setting as \Cref{prop:non-iid-direct-mom}. For every $\delta\in(0,1)$, with probability at least
$1-\delta$,
\begin{equation*}
  \widehat\mu_{\mathrm{MoM}}-\E \widehat\mu_{\mathrm{MoM}}
  \le V_{\mathrm{mom}}^{1/p}
  \left(\frac{p}{p-1}\ln\frac{1}{\delta}\right)^{(p-1)/p}.
\end{equation*}
\end{remark}

\begin{proofof}{\Cref{col:mom_rate}}
The proof of \Cref{col:mom_rate} relies on the next lemma.
\begin{lemma}\label{lem:mom}
Let $X_1,\ldots,X_n$ be i.i.d.\ copies of $X$. For $t>0$, define $\overline q_t\coloneqq\exp\!\left( -\frac{p-1}{p}\frac{mt^{p/(p-1)}}{M_p^{1/(p-1)}}\right).$\\
If $\overline q_t<\frac{K+1}{2K}$,
\begin{equation*}
  \mathbb P\!\left(\widehat\mu_{\mathrm{MoM}}-\E[X]\ge t\right)
  \le
  \exp\!\left(-K d((K+1)/2K \Vert\overline q_t)\right).
\end{equation*}
Also,
\begin{equation*}
  \mathbb P\!\left(
    \widehat\mu_{\mathrm{MoM}}-\E[X]
    \ge
    M_p^{1/p}
    \left(
      \frac{p\ln4}{p-1}\frac{K}{n}
    \right)^{(p-1)/p}
  \right)
  \le
  \left(\frac{\sqrt{3}}{2}\right)^K.
\end{equation*}
\end{lemma}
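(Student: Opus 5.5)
We reduce both claims to a binomial tail and then apply the Chernoff bound for Bernoulli sums. By the identity \eqref{eq:p_eq}, $\{\widehat\mu_{\mathrm{MoM}}-\E[X]\ge t\}$ is exactly the event that at least $(K+1)/2$ of the block means satisfy $Y_j-\E[X]\ge t$. The blocks consist of disjoint sets of i.i.d.\ observations, so the indicators $B_j\coloneqq\mathbf 1\{Y_j-\E[X]\ge t\}$ are i.i.d.\ Bernoulli with a common parameter $q_t\coloneqq\Prob(Y_1-\E[X]\ge t)$. Each $Y_j$ is the empirical mean of $m$ i.i.d.\ copies of $X$. Hence \Cref{col:gen_empirical}, applied with $n$ replaced by $m$ and $\overline M_m=M_p$ (as in \Cref{rmk:empirical}), gives $q_t\le\overline q_t$.

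For the first bound, $\sum_j B_j$ is stochastically dominated by a $\mathrm{Bin}(K,\overline q_t)$ variable. One can couple each $B_j$ with a Bernoulli$(\overline q_t)$ variable that dominates it, or simply use monotonicity of binomial upper tails in the success probability. We then invoke the standard Chernoff--Hoeffding bound
\[
\Prob\bigl(\mathrm{Bin}(K,q)\ge \alpha K\bigr)\le \exp\bigl(-K\,d(\alpha\Vert q)\bigr),\qquad \alpha>q,
\]
with $\alpha=(K+1)/(2K)$. This bound comes from $\Prob(\sum B_j\ge\alpha K)\le e^{-\lambda\alpha K}(1-q+qe^\lambda)^K$, optimized at $e^{\lambda}=\alpha(1-q)/(q(1-\alpha))$. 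Since $\alpha>\overline q_t$ by assumption, the optimizer satisfies $\lambda>0$. Note that $d(\alpha\Vert q)$ is decreasing in $q$ on $(0,\alpha)$, so the bound with $\overline q_t\ge q_t$ is the weaker one and is valid.

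For the second bound, choose $t$ so that $\overline q_t=1/4$. Solving $\frac{p-1}{p}\frac{mt^{p/(p-1)}}{M_p^{1/(p-1)}}=\ln 4$ with $m=n/K$ gives exactly $t=M_p^{1/p}\bigl(\frac{p\ln4}{p-1}\frac{K}{n}\bigr)^{(p-1)/p}$. Since $1/4<1/2<\alpha$, the first part applies. It remains to show $K\,d(\alpha\Vert 1/4)\ge K\ln(2/\sqrt3)$. Because $d(\cdot\Vert1/4)$ is increasing on $[1/4,1]$ and $\alpha\ge1/2$, we have
\[
d(\alpha\Vert1/4)\ge d(1/2\Vert1/4)=\tfrac12\ln2+\tfrac12\ln\tfrac23=\tfrac12\ln\tfrac43=\ln\tfrac{2}{\sqrt3},
\]
which yields $(\sqrt3/2)^K$.

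The main obstacle is bookkeeping rather than substance. We need to justify domination by $\overline q_t$ when only an upper bound on $q_t$ is available, which is handled by monotonicity of binomial tails. We also need to check the direction of monotonicity of $d$ in each argument.
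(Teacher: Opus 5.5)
Your proposal is correct and follows the paper's proof: the count identity, the per-block bound $\mathbb P(Y_j-\mu\ge t)\le\overline q_t$ from \Cref{col:gen_empirical}, a Chernoff bound on the number of exceeding blocks with the same optimizing $\lambda$, and finally $d(\alpha\Vert 1/4)\ge d(1/2\Vert 1/4)=\ln(2/\sqrt3)$. The only cosmetic difference is how you pass from $q_t$ to $\overline q_t$: you use binomial stochastic domination (or monotonicity of $d$ in its second argument), whereas the paper inserts the bound directly into $1+(e^\lambda-1)\mathbb P(Y_j-\mu\ge t)\le 1-\overline q_t+\overline q_t e^\lambda$, which is valid because $e^\lambda-1>0$.
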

\begin{proofof}{\Cref{lem:mom}}
For $t>0$, define
\begin{align}
  \overline q_t
  &\coloneqq\exp\!\left(
    -\frac{p-1}{p}
    \frac{mt^{p/(p-1)}}{M_p^{1/(p-1)}}
  \right).
\end{align}
    Fix $j\in[K]$. Recall that $Y_j = \frac{1}{m} \sum_{i \in [m]} X_{ji}$. Since $X_{ji} \leq b$ a.s. for every $ji$, and the observations are independent, by \Cref{col:gen_empirical}, for every $t>0$ 
\begin{align*}
    \mathbb P(Y_j-\mu\ge t)
  &\le
    \exp
  \left(
    -\frac{p-1}{p}
    \frac{mt^{p/(p-1)}}{M_p^{1/(p-1)}}
  \right) = \overline q_t.
\end{align*}
Writing $k\coloneqq(K+1)/2$,
\begin{equation*}
\{\widehat\mu_{\mathrm{MoM}}-\mu\ge t\}
=\{\#\{j\in[K]: Y_j-\mu\ge t\}\ge k\}
=\bigcup_{\substack{S\subseteq[K]\\ |S|=k}}\bigcap_{j\in S}\{Y_j-\mu\ge t\}.
\end{equation*}
Hence, with Markov's inequality and by independence of the blocks,
\begin{equation}
    \mathbb P \left(\widehat\mu_{\mathrm{MoM}}-\mu \geq t\right) = \mathbb P \left(\sum_{j=1}^K\mathbf 1_{\{Y_j-\mu\ge t\}}\ge K \frac{K+1}{2K}\right)\le \inf_{\lambda>0}\exp
  \left(
    -\lambda K\frac{K+1}{2K}
  \right)\prod_{j=1}^K
  \mathbb E
  \left[
    e^{\lambda\mathbf 1_{\{Y_j-\mu\ge t\}}}
  \right].
\end{equation}
For every $j$,
\begin{align*}
  \mathbb E
  \left[
    e^{\lambda\mathbf 1_{\{Y_j-\mu\ge t\}}}
  \right]
  =
  1+
  (e^\lambda-1)
  \mathbb P(Y_j-\mu\ge t)
  \le
  1-\overline q_t+\overline q_te^\lambda.
\end{align*}
It follows that
\begin{align}
  \mathbb P
  \left(
    \widehat\mu_{\mathrm{MoM}}-\mu\ge t
  \right)
  \le
  \inf_{\lambda>0}
  \exp
  \left[
    K
    \left(
      -\lambda \frac{K+1}{2K}
      +
      \ln
      \left(
        1-\overline q_t+\overline q_te^\lambda
      \right)
    \right)
  \right].
  \label{eq:mom-chernoff}
\end{align}
Since $\overline q_t<\frac{K+1}{2K}$, the number
\[
  \lambda
  =
  \ln
  \left(
    \frac{\frac{K+1}{2K}(1-\overline q_t)}
         {\overline q_t(1-\frac{K+1}{2K})}
  \right)
\]
is positive. For this choice,
\begin{align*}
  1-\overline q_t+\overline q_te^\lambda
  =
  1-\overline q_t
  +
  \overline q_t
  \frac{\frac{K+1}{2K}(1-\overline q_t)}
       {\overline q_t(1-\frac{K+1}{2K})}
  =
  (1-\overline q_t)
  \left(
    1+\frac{\frac{K+1}{2K}}{1-\frac{K+1}{2K}}
  \right)
  =
  \frac{1-\overline q_t}{1-\frac{K+1}{2K}}.
\end{align*}
Substituting this value of $\lambda$ into the exponent in
\eqref{eq:mom-chernoff} gives
$-\lambda \frac{K+1}{2K}
  +
  \ln
  \left(
    1-\overline q_t+\overline q_te^\lambda
  \right)=
  -d(\frac{K+1}{2K}\Vert\overline q_t).
$
Therefore,
\[
  \mathbb P
  \left(
    \widehat\mu_{\mathrm{MoM}}-\mu\ge t
  \right)
  \le
  \exp
  \left(
    -K d((K+1)/2K \Vert\overline q_t)
  \right).
\]
    
    For the second part, fix some $\rho$ such that  $0<\rho<\frac{K+1}{2K}$.
    \begin{equation*}
        \exp\!\left(
    -\frac{p-1}{p}
    \frac{mt^{p/(p-1)}}{M_p^{1/(p-1)}}
  \right)=\rho
  \Leftrightarrow t=M_p^{1/p}
    \left(
      \frac{p\ln(1/\rho)}{p-1}\frac{K}{n}
    \right)^{(p-1)/p}
    \end{equation*}
    For $t^*=M_p^{1/p}
    \left(
      \frac{p\ln(1/\rho)}{p-1}\frac{K}{n}
    \right)^{(p-1)/p}$, $\overline q_{t^*}=\rho<\frac{K+1}{2K}$. Using the first part, 
    \begin{equation*}
  \mathbb P\!\left(\widehat\mu_{\mathrm{MoM}}-\mu\ge M_p^{1/p}
    \left(
      \frac{p\ln(1/\rho)}{p-1}\frac{K}{n}
    \right)^{(p-1)/p}\right)
  \le
  \exp\!\left(-K d((K+1)/2K \Vert\rho )\right).
\end{equation*}
The claim follows by taking $\rho=\frac{1}{4}$ and $d\left((K+1)/2K\Vert1/4\right)\ge d(1/2\Vert1/4)=\ln (2/\sqrt{3}).$
\end{proofof}
Using the first part of \Cref{lem:mom} and the equivalence 
\[\overline q_t<\frac{K+1}{2K} \Leftrightarrow t>\left(\frac{p}{p-1}\right)^{\frac{p-1}{p}} M_p^{1/p}\left(\frac{1}{m}\ln\frac{2K}{K+1}\right)^{\frac{p-1}{p}}\]
proves the first part of the claim in \Cref{col:mom_rate}. 
Using the second part of \Cref{lem:mom} and the equivalence of $(\sqrt{3}/2)^K\le\delta \Leftrightarrow K\ge\frac{\ln(1/\delta)}{\ln(2/\sqrt3)}$ proves the second part of the claim in \Cref{col:mom_rate}.
\end{proofof}

\begin{proofof}{\Cref{rmk:mom}}
    As in the proof of \Cref{col:mom_rate}, we use  $\mathbb P(Y_j-\mu\ge t)\le\overline q_t$ (\Cref{col:gen_empirical}) and the identity \eqref{eq:p_eq}. Instead of the Chernoff bound, we can use the union bound together with independence. For every $t>0$,
\begin{equation*}
\mathbb P\!(\widehat\mu_{\mathrm{MoM}}-\mu\ge t)
\!=\!\mathbb P\!\left(\!\bigcup_{\substack{S\subseteq[K]\\ |S|=\frac{K+1}{2}}}\bigcap_{j\in S}\{Y_j-\mu\ge t\}\!\right)
\!\le\!\sum_{\substack{S\subseteq[K]\\ |S|=\frac{K+1}{2}}}\!\prod_{j\in S}\!
\mathbb P(Y_j-\mu\ge t)
\!\le\!\binom{K}{(K+1)/2}\,\overline q_t^{\,\frac{K+1}{2}}.
\end{equation*}
\end{proofof}

\subsection{Proofs for the application to uniform risk bounds for unbounded losses (\texorpdfstring{\Cref{sec:empirical-processes}}{Section 3.2.3})}
\begin{proofof}{\Cref{cor:empirical-processes}}
Define
\[
T_n\coloneqq\sup_{h\in\mathcal{H}} \bigl(R(h)-\widehat R_n(h)\bigr).
\]
Countability ensures measurability, and for any fixed $h_0\in\mathcal{H}$,
\[
R(h_0)-\widehat R_n(h_0)\le T_n \le\sup_{h\in\mathcal{H}}R(h)\le M_p^{1/p}.
\]
Hence $T_n$ is integrable. We apply \Cref{thm:heavy-tail} to $f(X)=T_n$. For $k\in[n]$, let
\[
T_{n,k}:=\sup_{h\in\mathcal{H}} \left(R(h)-\frac1n\sum_{j\ne k}\ell_h(X_j)\right).
\]
Nonnegativity of the losses implies $\sup_k f(X)\le T_{n,k}$. For $\eta>0$, choose an $\X_{-k}$-measurable index $h_{k,\eta}$ such that
\[
R(h_{k,\eta})-\frac1n\sum_{j\ne k}\ell_{h_{k,\eta}}(X_j) \ge T_{n,k}-\eta.
\]
Such a choice is obtained by taking the first qualifying index in a fixed enumeration of $\mathcal{H}$. Evaluating $T_n$ at this index yields
\[
0\le D_k(X)(X)\le T_{n,k}-T_n \le\eta+\frac{\ell_{h_{k,\eta}}(X_k)}n \quad \text{(almost surely)}.
\]
Since $h_{k,\eta}$ depends only on $\X_{-k}$, independence and \eqref{eq:ep-moment} give $\mathbb{E}^{(k)}[\ell_{h_{k,\eta}}(X_k)^p]\le M_p$ almost surely. Conditional Minkowski's inequality therefore gives
\[
\bigl(\mathbb{E}^{(k)}[D_k(\X)(\X)^p]\bigr)^{1/p} \le\eta+\frac{M_p^{1/p}}n.
\]
Letting $\eta\downarrow0$ through a deterministic sequence and summing over $k$, we obtain
\[
V=\left\|\sum_{k=1}^n \mathbb{E}^{(k)}[D_k(\X)(\X)^p]\right\|_{L_\infty(P_X)} \le n^{1-p}M_p.
\]
\Cref{thm:heavy-tail} now implies that, with probability at least $1-\delta$,
\[
T_n\le\mathbb{E}[T_n] +M_p^{1/p} \left(\frac{p}{p-1}\frac{\ln(1/\delta)}n\right)^{(p-1)/p}.
\]
Finally, let $X'$ be an independent copy of $X$, independent of $\varepsilon$. Symmetrization yields
\[
\begin{aligned}
\mathbb{E}[T_n] &\le\mathbb{E}_{X,X'}\!\left[ \sup_{h\in\mathcal{H}}\frac1n\sum_{k=1}^n \bigl(\ell_h(X'_k)-\ell_h(X_k)\bigr)\right]\\
&=\mathbb{E}_{X,X',\varepsilon}\!\left[ \sup_{h\in\mathcal{H}}\frac1n\sum_{k=1}^n \varepsilon_k\bigl(\ell_h(X'_k)-\ell_h(X_k)\bigr)\right]\\
&\le2\mathfrak{R}_n(\mathcal{L}).
\end{aligned}
\]
The equality follows by exchanging each pair $(X_k,X'_k)$ according to the sign of $\varepsilon_k$, and the last inequality follows by splitting the supremum of the difference into two suprema. Combining the last two displays proves the claim.
\end{proofof}

\section{Detailed calculations for comparisons with existing works}
\subsection{Necessity of reverse KL tensorization (\texorpdfstring{\Cref{rmk:kl_comparison}}{Remark 3.5})}\label{app:comparison_kl}
To support the necessity of \emph{reverse KL tensorization} (\Cref{thm:reverse-kl}) in proving \Cref{thm:heavy-tail}, we show that a naive application of the entropy method \citep{maurer2012thermodynamics} provides a tail bound that is worse than \Cref{thm:heavy-tail}.
\begin{proposition}\label{prop:impossibility}
    By the Chernoff bound and the Herbst argument \eqref{eq:Herbst}, the tail probability is bounded by
    \begin{equation*}
   \mathbb{P}(f(\X)-\E[f(\X)]\ge t)\le\inf_{\beta>0} \exp \left(\beta\int_0^\beta
\frac{\KL\!\left(P_{\gamma f} \,\middle\|\, P_{\X}\right)}{\gamma^2}\,d\gamma -\beta t\right),
   \end{equation*}
   where $P_{tf}$ is the tilted measure defined in \eqref{eq:tilted_measure}. $\KL\!\left(P_{tf} \,\middle\|\, P_{\X}\right)$ is bounded by the subadditivity of entropy \citep[Theorem 4.22]{boucheron2013concentration} \citep[Proposition 4.1]{posta1997spectral} as 
   \begin{equation*}
\KL(P_{t f}\Vert P_{\X})
\le
\sum_{i=1}^n
\KL\!\left(
P_{t f,\,X_i\mid \X_{-i}}\,\middle\Vert\,P_{X_i}\,\middle|\,P_{t f,\,\X_{-i}}
\right)=\E_{P_{t f}}
\left[
\sum_{i=1}^n
\KL\!\left(P^{(i)}_{tf}(\cdot\mid \X_{-i})\,\middle\Vert\,P_{X_i}\right)
\right].
\end{equation*}
To achieve the same bound as \eqref{eq:reverse_chernoff} by bounding the conditional KL divergence of the form 
\[
\KL\!\left(P^{(i)}_{tf}(\cdot\mid x_{-i})\,\middle\Vert\,P_{X_i}\right)
\le
C_p t^p\,\E^{(i)}[D_i^p|\X_{-i}=x_{-i}],\]
we need $C_p=\frac{p-1}{p}$. However, under \Cref{ass:McDiarmid's}, $C_p=\frac{p-1}{p}$ is not valid uniformly over all \(p\in(1,2]\) and all nonnegative
random variables. 
\end{proposition}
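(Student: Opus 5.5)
The proof has two parts. First, we compute which constant $C_p$ the Herbst route requires in order to reproduce \eqref{eq:reverse_chernoff}. Second, we exhibit a one-coordinate example showing that this constant fails for some $p\in(1,2]$.

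\emph{Step 1 (identifying $C_p$).} Assume the conditional bound holds with some constant $C_p$. Plugging it into the subadditivity inequality gives
\[
\KL(P_{\gamma f}\Vert P_{\X})\le C_p\gamma^p\,\E_{P_{\gamma f}}\Big[\sum_i \E^{(i)}[D_i^p]\Big]\le C_p V\gamma^p,
\]
where the last step uses \Cref{ass:McDiarmid's}: the sum is bounded $P_{\X}$-a.s., and $P_{\gamma f}\ll P_{\X}$. The Herbst identity \eqref{eq:Herbst} then yields
\[
\ln\E\big[e^{\beta(f-\E f)}\big]\le \beta\int_0^\beta C_pV\gamma^{p-2}\,d\gamma=\frac{C_p}{p-1}V\beta^p ,
\]
and the integral is finite because $p>1$. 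To obtain the exponent $V\beta^p/p$ of \eqref{eq:reverse_chernoff}, we need $C_p/(p-1)=1/p$, that is, $C_p=(p-1)/p$. Any larger admissible constant only worsens the exponent after optimizing over $\beta$, which gives the claimed necessity.

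\emph{Step 2 (the constant is not valid).} Take $n=1$ and $f=-D$, where $D$ takes the values $0$ and $1$ with probability $1/2$ each. Then $\operatorname{ess\,sup} f=0$ and $D_1=D$, so \Cref{ass:McDiarmid's} holds with $V=\E D^p=1/2$ for every $p$. We use the identity (cf. the computation before \eqref{rough_key})
\[
\KL(P_{tf}\Vert P_{\X})=t\,\E_{P_{tf}}[f]-\ln\E[e^{tf}].
\]
Fix $t=1$. The left-hand side is then an explicit positive constant $\kappa$, independent of $p$. It is strictly positive because $e^{-D}$ is nonconstant, so $P_{f}\neq P_{\X}$. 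The right-hand side of the proposed bound is $\frac{p-1}{p}\cdot 1\cdot\frac12$, which tends to $0$ as $p\downarrow 1$. Hence for $p$ close enough to $1$ the inequality with $C_p=(p-1)/p$ fails.

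The claim is therefore established. The only delicate point is noting that a single fixed pair $(D,t)$ suffices, because $C_p\to0$ as $p\to1$ while the KL value does not depend on $p$. If a counterexample at a specific $p$ were desired, one would evaluate $\kappa=\tfrac{1}{1+e}-\ln\tfrac{1+e^{-1}}{2}$ numerically and solve $\frac{p-1}{2p}<\kappa$ for $p$.
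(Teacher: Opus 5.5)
Your argument is correct. Step 1 is the same computation the paper does, though the paper runs it in the other direction. It first proves the conditional bound with the valid constant $1/p$, via $\ln(1+x)\le x$, Jensen, and $x-\ln(1+x)\le x^p/p$. This yields the weaker exponent $V\beta^p/(p(p-1))$, and only then does the paper observe that matching \eqref{eq:reverse_chernoff} would require $(p-1)/p$.

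You differ from the paper in the counterexample. The paper fixes $p=6/5$ and uses the skewed law $\Pp(D=0)=e^{-10}$, $\Pp(D=10)=1-e^{-10}$. It then checks numerically that $\KL(P_{-D}\Vert P_{\X})>4>10/3>\frac{p-1}{p}\E D^p$. You instead take $D\in\{0,1\}$, for which both $\E D^p$ and the KL at $t=1$ do not depend on $p$. Failure then follows from $(p-1)/p\to0$ alone, with no numerics, and on a whole interval of exponents near $1$. The paper's version gives a concrete witness at a fixed exponent away from $1$. Its positive $1/p$ bound also explains why any such counterexample must break down at $p=2$, where $1/p=(p-1)/p$.

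There is one slip in your closing remark. Since $t\,\E_{P_{f}}[f]=-\E[De^{-D}]/\E[e^{-D}]=-1/(1+e)$, the correct value is $\kappa=-\frac{1}{1+e}-\ln\frac{1+e^{-1}}{2}\approx0.111$, not $\frac{1}{1+e}-\ln\frac{1+e^{-1}}{2}\approx0.649$. Your main argument only uses $\kappa>0$, so it is unaffected. But with your value you would conclude failure for every $p\in(1,2]$, including $p=2$, where the constant $1/2$ is in fact valid. With the correct value, the inequality at $t=1$ fails exactly when $p<1/(1-2\kappa)\approx1.285$. This range contains the paper's choice $p=6/5$.
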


\begin{proof}
    By the Chernoff bound and the Herbst argument \eqref{eq:Herbst},
    \begin{equation}\label{eq:ent_chernoff}
   \mathbb{P}(f(\X)-\E[f(\X)]\ge t)\le\inf_{\beta>0} \exp (\beta\int_0^\beta
\frac{\KL\!\left(P_{tf} \,\middle\|\, P_{\X}\right)}{t^2}\,dt -\beta t),
   \end{equation}
   where $P_{tf}$ is the tilted measure defined in \eqref{eq:tilted_measure}. For $i\in[n]$, we define the conditional counterpart of a tilted measure $P_h$ by tilting the $i$-th coordinate only. For fixed \(x_{-i}\in\mathcal{X}^n\), let \(P^{(i)}_h(\cdot\mid x_{-i})\)
be the probability measure on the \(i\)-th coordinate space $\mathcal{X}$ with
\begin{equation*}
\frac{dP^{(i)}_h(\cdot\mid x_{-i})}{dP_{X_i}}(x_i)
\coloneqq\frac{e^{h(x_i,x_{-i})}}{\E^{(i)}\bigl[e^{h}\bigr](x_{-i})} .
\end{equation*}
We write the KL divergence between conditional tilted measure and the marginal distribution as a function on $\mathcal{X}^{n-1}$ as
\begin{equation*}
\KLi{h}(x_{-i})
\coloneqq\KL\!\left(P^{(i)}_h(\cdot\mid x_{-i})\,\middle\Vert\,P_{X_i}\right).
\end{equation*}
For the conditional KL divergence
averaged against the marginal of \(P_h\),
\begin{equation*}
\KL\!\left(P_{h,\,X_i\mid \X_{-i}}\,\middle\Vert\,P_{X_i}\,\middle|\,P_{h,\,\X_{-i}}\right)
\coloneqq\int \KLi{h}(x_{-i})\,dP_{h,\,\X_{-i}}(x_{-i})
=\E_{P_h}\bigl[\KLi{h}(\X_{-i})\bigr].
\end{equation*}
By subadditivity of entropy \citep[Theorem 4.22]{boucheron2013concentration}, \citep[Proposition 4.1]{posta1997spectral}, also known as KL tensorization,
\begin{equation*}
\KL(P_{t f}\Vert P_{\X})
\le
\sum_{i=1}^n
\KL\!\left(
P_{t f,\,X_i\mid \X_{-i}}\,\middle\Vert\,P_{X_i}\,\middle|\,P_{t f,\,\X_{-i}}
\right)
=
\E_{P_{t f}}
\left[
\sum_{i=1}^n
\KLi{t f}
\right].
\end{equation*}
Denote a function $D_i$ on $\mathcal X^n$ defined as $D_i(x_1^n)=\sup_i f(x_1^n)-f(x_1^n)$,
\begin{align*}
    \KLi{tf}(x_{-i})&=\KL\!\left(P^{(i)}_{tf}(\cdot\mid x_{-i})\,\middle\Vert\,P_{X_i}\right)
    =\int \frac{e^{tf(x_i,x_{-i})}}{\E^{(i)}\bigl[e^{tf}\bigr](x_{-i})} \ln \left(\frac{e^{tf(x_i,x_{-i})}}{\E^{(i)}\bigl[e^{tf}\bigr](x_{-i})}\right) dP_{X_i}\\&
    =\frac{\E^{(i)}[tfe^{tf}](x_{-i})}{\E^{(i)}\bigl[e^{tf}\bigr](x_{-i})}-\ln \left(\E^{(i)}\bigl[e^{tf}\bigr](x_{-i})\right)\\ &=\frac{\E^{(i)}[-tD_ie^{-tD_i}](x_{-i})}{\E^{(i)}\bigl[e^{-tD_i}\bigr](x_{-i})}-\ln \left(\E^{(i)}\bigl[e^{-tD_i}\bigr](x_{-i})\right)\\&
    =-\frac{\E^{(i)}[tD_ie^{-tD_i}](x_{-i})}{\E^{(i)}\bigl[e^{-tD_i}\bigr](x_{-i})}-\ln \left(\E^{(i)}\bigl[e^{-tD_i}\bigr](x_{-i})\right)\\&
    \leq-\ln\left(1+\frac{\E^{(i)}[tD_ie^{-tD_i}](x_{-i})}{\E^{(i)}\bigl[e^{-tD_i}\bigr](x_{-i})}\right)-\ln \left(\E^{(i)}\bigl[e^{-tD_i}\bigr](x_{-i})\right)\\&
    =-\ln\left(\E^{(i)}\bigl[(1+tD_i)e^{-tD_i}\bigr](x_{-i})\right)\\&
    \leq -\E^{(i)}\bigl[\ln\left((1+tD_i)e^{-tD_i}\right)\bigr](x_{-i})\\&
    =-\E^{(i)}\bigl[\ln\left(1+tD_i\right)-tD_i\bigr](x_{-i})
    \le\frac{t^p\E^{(i)}\bigl[ D_i^p\bigr](x_{-i})}{p}.
\end{align*}
The last inequality holds because $x-\ln(1+x)\le\frac{x^p}{p}$ for $p\in(1,2]$ and $x\ge0$.\\
Thus,
\begin{align*}
&\KL(P_{t f}\Vert P_{\X})
\le
\frac{t^p}{p}\E_{P_{t f}}
\left[
\sum_{i=1}^n
\E^{(i)}\bigl[ D_i(\X)^p\bigr]
\right]\le\frac{t^p}{p}V,\\
&\beta\int_{0}^{\beta}\frac{\KL\!\left(P_{tf} \,\middle\|\, P_{\X}\right)}{t^2}\,dt\leq\frac{V\beta^p}{p(p-1)}.
\end{align*}
Combined with \eqref{eq:ent_chernoff},
\[
\mathbb P(f(\X)-\mathbb E[f(\X)]\ge t)
  \le
  \inf_{\beta>0}
  \exp\!\left(-\beta t+\frac{V\beta^p}{p(p-1)}\right)=\exp\!\left(-\frac{p-1}{p}\left(\frac{p-1}{V}\right)^{1/(p-1)}t^{p/(p-1)}\right).
\]
The bound is worse than \eqref{eq:reverse_chernoff}. To achieve the same bound as \eqref{eq:reverse_chernoff} by bounding the conditional KL divergence of the form $\KLi{-t D_i}(x_{-i})
\le
C_p t^p\,\E^{(i)}[D_i^p](x_{-i})$, we need $C_p=\frac{p-1}{p}$. However, $C_p=\frac{p-1}{p}$ is not valid uniformly over all \(p\in(1,2]\) and all nonnegative
random variables. 
For example, let
\[
p=\frac65,
\qquad
\mathbb{P}(X=0)=e^{-10},
\qquad
\mathbb{P}(X=10)=1-e^{-10}.
\]
Then, this random variable satisfies \Cref{ass:McDiarmid's} with $n=1$ and $f(X)=-X$. A direct calculation gives
\[
\KL(P_{-X}\Vert P)
=-\E_{P_{-X}}[X]-\ln\E[e^{-X}]
=
\frac{10}{2-e^{-10}}
-\ln(2-e^{-10})
>
5-\ln2
>
4.
\]
On the other hand,
\[
\frac{p-1}{p}\mathbb{E}X^p
=
\frac16(1-e^{-10})10^{6/5}
<
\frac{10}{3}.
\]
Thus $C_p = \frac{p-1}{p}$ fails already at $p = \frac65$, so this method cannot recover \eqref{eq:reverse_chernoff} uniformly over $p\in(1,2]$.
\end{proof}

\subsection{Comparison with polynomial decay (\texorpdfstring{\Cref{rmk:empirical_comparison}}{Remark 3.8})}\label{app:comparison}
We first assume $1<p\le2$. \citet[Theorem 2.6]{lei2026stability} states that a function
$f$ of independent inputs, with nonnegative replacement envelopes
satisfying
\[
|f(X)-f(X^{(k)})|
\le H_k(X_k,X_k'),
\qquad
\E H_k^p<\infty,
\]
obeys, for every $Q\ge1$ and $z>0$,
\begin{equation}\label{eq:theorem}
\Prob(|f(X)-\E f(X)|>z)
\le
\sum_{k=1}^n
\Prob\left(H_k>\frac{z}{4Q}\right)
+
2\left(
\frac{e\,4^pQ^{p-1}\sum_{k=1}^n\E H_k^p}{z^p}
\right)^Q.
\end{equation}
Here $X_k'$ is an independent copy of $X_k$, and $X^{(k)}$
denotes $X$ with its $k$-th coordinate replaced by $X_k'$.

Take
\[
f(X)=\overline X_n,
\qquad
H_k=\frac{|X_k-X_k'|}{n}.
\]
Write
\[
Y_k=b_k-X_k,
\qquad
Y_k'=b_k-X_k'.
\]
Both variables are nonnegative almost surely. Since
$|a-b|^p\le a^p+b^p$ for $a,b\ge0$,
\begin{equation}\label{eq:replacement-moments}
\E H_k^p
=
\frac{\E|Y_k-Y_k'|^p}{n^p}
\le
\frac{2M_{k,p}}{n^p},
\qquad
\sum_{k=1}^n\E H_k^p\le2V_n,
\end{equation}
where $V_n\coloneqq\frac1{n^p}\sum_{k=1}^n M_{k,p}
=n^{1-p}\overline M_n.$
Here, the constant two in these estimates is optimal.
Indeed, fix $c>0$ and let
\[
Y_k=cB_k,\qquad X_k=b_k-cB_k,
\]
where $B_1,\ldots,B_n$ are independent
$\operatorname{Bernoulli}(\varepsilon)$ random variables.
Let $B_k'$ be their independent copies. Then
\[
M_{k,p}=c^p\E[B_k^p]=\varepsilon c^p,
\qquad
\E H_k^p
=\frac{c^p\E[|B_k-B_k'|^p]}{n^p}
=
\frac{2\varepsilon(1-\varepsilon)c^p}{n^p}.
\]
Consequently,
\[
\frac{\E H_k^p}{2M_{k,p}/n^p}
=
\frac{\sum_{k=1}^n\E H_k^p}{2V_n}
=
1-\varepsilon
\longrightarrow1
\qquad(\varepsilon\downarrow0).
\]
Thus no universal constant smaller than two can replace
two in either estimate in \eqref{eq:replacement-moments}.

Next, Markov's inequality gives
\begin{equation}\label{eq:markov}
\sum_{k=1}^n
\Prob\left(H_k>\frac{z}{4Q}\right)
\le
\frac{(4Q)^p}{z^p}\sum_{k=1}^n\E H_k^p
\le
2(4Q)^p\frac{V_n}{z^p}.
\end{equation}
To verify the sharpness,
fix $z>0$ and $Q\ge1$, and denote $a=z/(4Q)$.
In the preceding construction $Y_k=cB_k$, choose
\[
c=na(1+\delta),\qquad \delta>0.
\]
Then $H_k$ equals $a(1+\delta)$ with probability
$2\varepsilon(1-\varepsilon)$ and is otherwise zero.
Therefore,
\[
\frac{\sum_{k=1}^n\Prob(H_k>a)}
     {a^{-p}\sum_{k=1}^n\E H_k^p}
     =\frac{2n\varepsilon(1-\varepsilon)}
     {a^{-p}na^p(1+\delta)^p2\varepsilon(1-\varepsilon)}
=
(1+\delta)^{-p}
\longrightarrow1
\qquad(\delta\downarrow0).
\]
Moreover,
\[
\frac{\sum_{k=1}^n\Prob(H_k>a)}
     {2V_n/a^p}
=
\frac{2n\varepsilon(1-\varepsilon)}
     {2a^{-p}n^{-p}nc^p\varepsilon}
=
(1-\varepsilon)(1+\delta)^{-p}
\longrightarrow1
\qquad(\varepsilon,\delta\downarrow0).
\]
Substituting \eqref{eq:replacement-moments} and
\eqref{eq:markov} into \eqref{eq:theorem} yields
\[
\Prob(|\overline X_n-\E\overline X_n|>z)
\le
2(4Q)^p\frac{V_n}{z^p}
+
2\left(
2e\,4^pQ^{p-1}\frac{V_n}{z^p}
\right)^Q.
\]
For every $0<z<t$,
\[
\{\overline X_n-\E\overline X_n\ge t\}
\subseteq
\{|\overline X_n-\E\overline X_n|>z\}.
\]
Letting $z\uparrow t$ therefore gives
\begin{equation}\label{eq:polynomial}
\Prob(\overline X_n-\E\overline X_n\ge t)
\le
\min\left\{1,\;
2(4Q)^p\frac{V_n}{t^p}
+
2\left(
2e\,4^pQ^{p-1}\frac{V_n}{t^p}
\right)^Q
\right\}.
\end{equation}
Since \eqref{eq:polynomial} holds for every $Q\ge1$, we may
optimize over $Q$. The expression inside the minimum is
continuous in $Q$ and tends to infinity as $Q\to\infty$,
because $V_n>0$. Its minimum is therefore attained,
establishing \eqref{eq:polynomial-optimized}.
Let $r=t^p/V_n>0$. The exponential bound \eqref{eq:mean-non-iid} is
\[
E_p(r)\coloneqq\exp\!\left(-\frac{p-1}{p}r^{1/(p-1)}\right).
\]
A direct maximization shows
\[
\sup_{r>0}rE_p(r)=\left(\frac pe\right)^{p-1}<1
\text{ for } 1<p\le2.
\]
Thus $E_p(r)<\min\{1,1/r\}$. On the other hand, for every $Q\ge1$,
the right-hand side of \eqref{eq:polynomial} is 
\[
\min\left\{1,\;
2(4Q)^p\frac{V_n}{t^p}
+2\left(2e\,4^pQ^{p-1}\frac{V_n}{t^p}\right)^Q\right\}=\min\left\{1,\;
\frac{2(4Q)^p}{r}
+2\left(\frac{2e\,4^pQ^{p-1}}{r}\right)^Q\right\}\ge\min\left\{1,\frac{2\,4^p}{r}\right\}.
\]
Since $\min\left\{1,\frac{2\,4^p}{r}\right\}\ge\min\{1,1/r\}>E_p(r)$, \eqref{eq:mean-non-iid} is strictly smaller than the
moment-only bound \eqref{eq:polynomial} for every $t>0$, even after
optimizing over $Q\ge1$.

\section{Mathematical Results}
\begin{lemma}[Pythagorean identity for conditional projections]
\label{lem:pythagoras}
Let $f\ge0$ with $\mathbb E[f]=1$, let $\mathcal G\subseteq\mathcal F$ be a
sub-$\sigma$-algebra, and set $f_{\mathcal G}\coloneqq\mathbb E[f\mid\mathcal G]$.
For a probability measure $\mu$ and a measurable $f\ge0$ with
$\mathbb E_\mu[f]<\infty$, define the entropy of $f$ relative to $\mu$ by
\begin{equation*}
  {\rm Ent}_\mu(f)
  \coloneqq\mathbb E_\mu[f\ln f]-\mathbb E_\mu[f]\ln\mathbb E_\mu[f],
\end{equation*}
with the convention $0\ln0=0$. If $\mathbb E[f|\ln f|]<\infty$, then $fP_{\X}\ll f_{\mathcal G}P_{\X}$ and
\[
  \KL\!\left(fP_{\X}\,\Vert\,f_{\mathcal G}P_{\X}\right)
  ={\rm Ent}_{P_{\X}}(f)-{\rm Ent}_{P_{\X}}(f_{\mathcal G}).
\]

\end{lemma}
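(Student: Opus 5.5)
We prove the Pythagorean identity by a direct computation with densities. Throughout we write $\mathbb E$ for $\mathbb E_{P_{\X}}$.

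\emph{Step 1: absolute continuity.} On the set $\{f_{\mathcal G}=0\}\in\mathcal G$ we have
\[
\mathbb E\bigl[f\mathbf 1_{\{f_{\mathcal G}=0\}}\bigr]=\mathbb E\bigl[f_{\mathcal G}\mathbf 1_{\{f_{\mathcal G}=0\}}\bigr]=0 .
\]
Since $f\ge0$, this gives $f=0$ almost surely on that set. Hence $fP_{\X}\ll f_{\mathcal G}P_{\X}$, with Radon--Nikodym derivative $f/f_{\mathcal G}$ on $\{f_{\mathcal G}>0\}$ (and, say, $0$ elsewhere). Consequently
\[
\KL(fP_{\X}\Vert f_{\mathcal G}P_{\X})=\mathbb E\bigl[f\ln(f/f_{\mathcal G})\bigr],
\]
with the convention $0\ln0=0$.

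\emph{Step 2: splitting the logarithm.} We want to write
\[
\mathbb E\bigl[f\ln(f/f_{\mathcal G})\bigr]=\mathbb E[f\ln f]-\mathbb E[f\ln f_{\mathcal G}],
\]
which requires both terms to be finite. The first is finite by the hypothesis $\mathbb E[f|\ln f|]<\infty$.

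For the second, we use that $\ln f_{\mathcal G}$ is $\mathcal G$-measurable. For a truncation $\phi_N\coloneqq(\ln f_{\mathcal G}\wedge N)\vee(-N)$, the tower property gives
\[
\mathbb E[f\phi_N]=\mathbb E[f_{\mathcal G}\phi_N].
\]
We then pass to the limit separately on the positive and negative parts using monotone convergence. This yields $\mathbb E[f(\ln f_{\mathcal G})_\pm]=\mathbb E[f_{\mathcal G}(\ln f_{\mathcal G})_\pm]$, whenever the latter quantities are finite.

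Finiteness comes from conditional Jensen's inequality applied to the convex function $x\mapsto x\ln x$:
\[
f_{\mathcal G}\ln f_{\mathcal G}\le\mathbb E[f\ln f\mid\mathcal G].
\]
This bounds $\mathbb E[(f_{\mathcal G}\ln f_{\mathcal G})_+]$. The negative part $x(\ln x)_-$ is bounded by $e^{-1}$ on $[0,1]$ and hence integrable. Therefore $\mathbb E[f_{\mathcal G}|\ln f_{\mathcal G}|]<\infty$.

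\emph{Step 3: conclusion.} Combining Steps 1 and 2,
\[
\KL(fP_{\X}\Vert f_{\mathcal G}P_{\X})=\mathbb E[f\ln f]-\mathbb E[f_{\mathcal G}\ln f_{\mathcal G}].
\]
Since $\mathbb E[f]=\mathbb E[f_{\mathcal G}]=1$, the correction terms $\mathbb E[\cdot]\ln\mathbb E[\cdot]$ in both entropies vanish. The right-hand side is therefore exactly ${\rm Ent}_{P_{\X}}(f)-{\rm Ent}_{P_{\X}}(f_{\mathcal G})$, which proves the identity.

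The main obstacle is the integrability bookkeeping in Step 2: justifying the replacement of $\mathbb E[f\ln f_{\mathcal G}]$ by $\mathbb E[f_{\mathcal G}\ln f_{\mathcal G}]$ when $\ln f_{\mathcal G}$ is unbounded. The truncation-plus-Jensen argument above handles this.
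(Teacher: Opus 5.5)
Your proof is correct and follows the paper's argument step for step: the null-set argument on $\{f_{\mathcal G}=0\}$ for absolute continuity, the splitting of the logarithm, the transfer $\mathbb E[f(\ln f_{\mathcal G})^{\pm}]=\mathbb E[f_{\mathcal G}(\ln f_{\mathcal G})^{\pm}]$ via $\mathcal G$-measurability, and conditional Jensen together with the bound $x\ln x\ge -1/e$ for finiteness. The only difference is that you justify the pull-out step by explicit truncation and monotone convergence, where the paper simply invokes the pull-out property for nonnegative $\mathcal G$-measurable factors; your qualifier ``whenever the latter quantities are finite'' is unnecessary, since monotone convergence gives the equality in $[0,\infty]$ in any case.
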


\begin{proof} 
Both $f$ and $f_{\mathcal G}$ are probability densities with respect to $P_{\X}$,
so ${\rm Ent}_{P_{\X}}(f)=\mathbb E[f\ln f]$ and likewise for $f_{\mathcal G}$.
Since $f_{\mathcal G}$ is $\mathcal G$-measurable, the set
$N\coloneqq\{f_{\mathcal G}=0\}$ lies in $\mathcal G$, so it is an admissible test set
in the defining property of conditional expectation,
$\int_A f\,d\mu=\int_A\mathbb E[f\mid\mathcal G]\,d\mu$ for all $A\in\mathcal G$.
Taking $A=N$ gives
\[
  \int_N f\,d\mu
  = \int_N \mathbb E[f\mid\mathcal G]\,d\mu
  =\mathbb E\big[\mathbf 1_N\,\mathbb E[f\mid\mathcal G]\big]
  =\mathbb E\big[\mathbf 1_N f_{\mathcal G}\big]=0 ,
\]
because $f_{\mathcal G}$ vanishes on $N$.
As $f\ge0$, this forces $f=0$ $\mu$-a.e.\ on $N$. Consequently, if
$A\in\mathcal F$ satisfies $\int_A f_{\mathcal G}\,d\mu=0$, then
$f_{\mathcal G}=0$ $\mu$-a.e.\ on $A$, i.e.\ $\mu(A\setminus N)=0$, and hence
$\int_A f\,d\mu=0$. Therefore $f\mu\ll f_{\mathcal G}\mu$, and for every
$A\in\mathcal F$,
\[
  \int_A\frac{f}{f_{\mathcal G}}\mathbf 1_{\{f_{\mathcal G}>0\}}f_{\mathcal G}\,d\mu
  =\int_{A\setminus N}f\,d\mu=\int_A f\,d\mu,
\]
so that
$\frac{d(f\mu)}{d(f_{\mathcal G}\mu)}=f/f_{\mathcal G}$ holds
$f_{\mathcal G}\mu$-a.e., the values on $N$ being irrelevant. Thus
\[
  \KL\!\left(f\mu\Vert f_{\mathcal G}\mu\right)
  =\mathbb E_\mu\!\left[f\ln\frac{f}{f_{\mathcal G}}\right],
  \qquad 0\ln 0\coloneqq0 .
\]
and
\[
  \KL\!\left(fP_{\X}\Vert f_{\mathcal G}P_{\X}\right)
  =\mathbb E\!\left[f\ln\frac{f}{f_{\mathcal G}}\right]
  =\mathbb E[f\ln f]-\mathbb E\!\left[f\ln f_{\mathcal G}\right].
\]
For the second term, note that $(\ln f_{\mathcal G})^{\pm}$ is
$\mathcal G$-measurable and nonnegative, so pulling it out of the conditional
expectation (valid for nonnegative factors, without integrability assumptions)
and using the tower property gives
\[
  \mathbb E\!\left[f(\ln f_{\mathcal G})^{\pm}\right]
  =\mathbb E\!\left[\mathbb E\!\left[f(\ln f_{\mathcal G})^{\pm}\mid\mathcal G\right]\right]
  =\mathbb E\!\left[(\ln f_{\mathcal G})^{\pm}\,\mathbb E[f\mid\mathcal G]\right]
  =\mathbb E\!\left[f_{\mathcal G}(\ln f_{\mathcal G})^{\pm}\right].
\]
$\mathbb E[f_{\mathcal G}\ln f_{\mathcal G}]
\le\mathbb E[f\ln f]<\infty$ by Jensen applied to the convex function
$x\mapsto x\ln x$. Also, $\mathbb E[-f_{\mathcal G}\ln f_{\mathcal G}]\le\frac{1}{e}$ since $\forall x\ge0$, $x\ln x\ge-\frac1e$. Subtracting,
$\mathbb E[f\ln f_{\mathcal G}]=\mathbb E[f_{\mathcal G}\ln f_{\mathcal G}]
={\rm Ent}_{P_{\X}}(f_{\mathcal G})$, which proves the claim.
\end{proof}

\begin{lemma}[Donsker--Varadhan variational formula]\label{lem:DV}
  Let $\mu$ be a probability measure and $f$ a measurable function with
  $\mathbb E_\mu[e^{f}]<\infty$. Then
  \begin{equation*}
    \ln\mathbb E_\mu\!\left[e^{f}\right]
    =\sup_{\nu\ll\mu}\Bigl\{\mathbb E_\nu[f]-\KL(\nu\Vert\mu)\Bigr\},
  \end{equation*}
  the supremum being attained at the tilted measure
  $d\tilde\mu\coloneqq e^{f}\,d\mu\big/\mathbb E_\mu[e^{f}]$.
  Equivalently, for every measurable $\rho\ge0$ with $\mathbb E_\mu[\rho]=1$,
  \begin{equation*}
    \mathbb E_\mu[\rho f]-\mathbb E_\mu[\rho\ln\rho]
    \;\le\;\ln\mathbb E_\mu\!\left[e^{f}\right].
  \end{equation*}
\end{lemma}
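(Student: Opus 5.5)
The statement to prove is the Donsker--Varadhan formula (\Cref{lem:DV}). The plan is to prove the inequality form first. The variational identity then follows by checking that the tilted measure attains the supremum.

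Fix a measurable $\rho\ge0$ with $\mathbb E_\mu[\rho]=1$, and let $\nu=\rho\mu$. Set $Z\coloneqq\mathbb E_\mu[e^f]\in(0,\infty)$ and let $\tilde\mu$ be the tilted measure, with density $e^f/Z$ relative to $\mu$. The density is strictly positive, so $\nu\ll\tilde\mu$ with
\[
\frac{d\nu}{d\tilde\mu}=\rho Z e^{-f}.
\]
Formally,
\[
0\le\KL(\nu\Vert\tilde\mu)=\mathbb E_\mu\!\left[\rho\ln\rho\right]-\mathbb E_\mu[\rho f]+\ln Z .
\]
Rearranging gives exactly
\[
\mathbb E_\mu[\rho f]-\mathbb E_\mu[\rho\ln\rho]\le\ln\mathbb E_\mu[e^f].
\]
Taking $\rho=e^f/Z$ makes the KL term vanish, so equality holds. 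This gives the supremum statement and its attainment. Every $\nu\ll\mu$ is of the form $\rho\mu$, so the supremum over $\nu$ is the same as the supremum over such $\rho$.

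The main obstacle is integrability. The splitting $\ln\frac{d\nu}{d\tilde\mu}=\ln\rho+\ln Z-f$ is legitimate only when the pieces are not $\infty-\infty$. I would handle this case by case.

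\emph{Case $\mathbb E_\mu[\rho\ln\rho]=+\infty$.} Here $\KL(\nu\Vert\mu)=\infty$, and the left side is interpreted as $-\infty$ (or the inequality is trivial) whenever $\mathbb E_\mu[\rho f]<\infty$.

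\emph{Case $\mathbb E_\mu[\rho\ln\rho]<\infty$.} First, $\mathbb E_\mu[\rho f^+]<\infty$ by the Young-type inequality $ab\le a\ln a-a+e^b$ with $a=\rho$, $b=f^+$. This gives
\[
\mathbb E_\mu[\rho f^+]\le\mathbb E_\mu[\rho\ln\rho]+\mathbb E_\mu[e^{f^+}]\le\mathbb E_\mu[\rho\ln\rho]+1+Z.
\]
So $\mathbb E_\mu[\rho f]$ is well defined in $[-\infty,\infty)$, and if it equals $-\infty$ the inequality is trivial. Otherwise, all three terms $\mathbb E_\mu[\rho\ln\rho]$, $\mathbb E_\mu[\rho f]$ and $\ln Z$ are finite.

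In that remaining subcase, $\mathbb E_\nu\bigl[\ln\frac{d\nu}{d\tilde\mu}\bigr]$ splits linearly. Its negative part is controlled by the standard fact that $x\ln x\ge -1/e$, applied with $x=\frac{d\nu}{d\tilde\mu}$ and integrated against $\tilde\mu$. Nonnegativity of KL then comes from Jensen's inequality applied to the convex function $x\ln x$ under $\tilde\mu$.

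Finally, I would record that the conventions $0\ln0=0$ and $\rho=0$ on a set cause no issue. On $\{\rho=0\}$ both integrands $\rho f$ and $\rho\ln\rho$ vanish.
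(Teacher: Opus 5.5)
The paper gives no argument of its own here (it defers to van Handel's lecture notes), and your route through $\KL(\nu\Vert\tilde\mu)=\mathbb E_\mu[\rho\ln\rho]-\mathbb E_\mu[\rho f]+\ln Z\ge0$ is the standard one. Your handling of the inequality form is sound. The Fenchel--Young bound $\rho f^+\le\rho\ln\rho-\rho+e^{f^+}$ makes $\mathbb E_\mu[\rho f^+]$ finite as soon as $\KL(\nu\Vert\mu)<\infty$, after which splitting $\ln\frac{d\nu}{d\tilde\mu}$ is legitimate. That inequality, applied with $f=h(x_{-k},\cdot)\le0$, is all the paper uses.

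\textbf{The gap.} The step that fails is the last one: ``taking $\rho=e^f/Z$ makes the KL term vanish, so equality holds.''

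\emph{Why it fails.} For this $\rho$ one has $\mathbb E_\mu[\rho\ln\rho]=\mathbb E_{\tilde\mu}[f]-\ln Z$. Under the sole hypothesis $\mathbb E_\mu[e^f]<\infty$, the quantity $\mathbb E_{\tilde\mu}[f]=\mathbb E_\mu[fe^f]/Z$ can be $+\infty$. Take $\mu(\{n\})\propto n^{-2}(\ln n)^{-2}$ on $\{2,3,\dots\}$ and $f(n)=\ln n$. Then $\mathbb E_\mu[e^f]\propto\sum_n n^{-1}(\ln n)^{-2}<\infty$, while $\mathbb E_\mu[fe^f]\propto\sum_n (n\ln n)^{-1}=\infty$. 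So $\rho=e^f/Z$ falls exactly into the subcase $\mathbb E_\mu[\rho f]=\mathbb E_\mu[\rho\ln\rho]=+\infty$ that your case analysis leaves open. You get neither $\sup\ge\ln Z$ nor attainment.

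\emph{Attainment is actually false here.} Read the objective literally as $\mathbb E_\nu[f]-\KL(\nu\Vert\mu)$. Every $\nu$ with $\KL(\nu\Vert\mu)<\infty$ gives $\ln Z-\KL(\nu\Vert\tilde\mu)$, which is $<\ln Z$ unless $\nu=\tilde\mu$. But $\KL(\tilde\mu\Vert\mu)=\infty$, so $\tilde\mu$ is excluded.

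\textbf{Two clean repairs.}
\begin{enumerate}
\item Restrict the attainment clause to $\mathbb E_\mu[f^+e^f]<\infty$ (e.g.\ $f$ bounded above), and prove $\sup\ge\ln Z$ in general by truncation. Set $f_M\coloneqq f\wedge M$ for $M\ge0$ and $\rho_M\coloneqq e^{f_M}/\mathbb E_\mu[e^{f_M}]$. All terms are then finite, and
\[
\mathbb E_\mu[\rho_M f]-\mathbb E_\mu[\rho_M\ln\rho_M]=\ln\mathbb E_\mu[e^{f_M}]+\mathbb E_\mu[\rho_M(f-M)^+]\ge\ln\mathbb E_\mu[e^{f_M}]\uparrow\ln Z .
\]
\item Read the objective as the single integral $\mathbb E_\mu[\rho(f-\ln\rho)]$. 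It is always well defined, since its positive part is at most $e^f$ by your Young bound. It equals $\ln Z-\KL(\nu\Vert\tilde\mu)$ for every admissible $\rho$. This gives the inequality, the identity, and attainment at $\tilde\mu$ in one line, and makes the case analysis unnecessary.
\end{enumerate}
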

For a proof, see \citet[Lemma~4.10]{van2014probability}.

\end{document}